\newtheorem*{theorem*}{Theorem}
\newtheorem*{prop*}{Proposition}
\newtheorem{prop}{Proposition}
\newtheorem{theorem}{Theorem}
\newtheorem*{definition*}{Definition}
\newtheorem*{fact*}{Fact}
\newtheorem{lemma}{Lemma}
\newtheorem*{lemma*}{Lemma}
\newtheorem*{rmk*}{Remark}
\newtheorem{rmk}{Remark}
\newtheorem*{corol*}{Corollary}
\newtheorem{corol}{Corollary}
\title[Bismut-Griffiths positivity and HCF]{Griffiths positivity for Bismut curvature and its behaviour along Hermitian Curvature Flows}
\author{Giuseppe Barbaro}
\address{Dipartimento di Matematica ``Guido Castelnuovo", Università la Sapienza, Piazzale Aldo Moro, 5, 00185 Roma, Italy} 
\email{g.barbaro@uniroma1.it}
\keywords{Hermitian Curvature Flows; Bismut connection; holomorphic bisectional curvature; linear Hopf manifolds; six-dimensional Calabi-Yau solvmanifolds}
\begin{document}

\maketitle

\begin{abstract}
    In this note we study a positivity notion for the curvature of the Bismut connection; more precisely, we study the notion of \emph{Bismut-Griffiths-positivity} for complex Hermitian non-K{\"a}hler manifolds. Since the K{\"a}hler-Ricci flow preserves and regularizes the usual Griffiths positivity we investigate the behaviour of the Bismut-Griffiths-positivity under the action of the Hermitian curvature flows. In particular we study two concrete classes of examples, namely, linear Hopf manifolds and six-dimensional Calabi-Yau solvmanifolds with holomorphically-trivial canonical bundle. From these examples we identify some HCFs which do not preserve Bismut-Griffiths-non-negativity.
\end{abstract}

\section{Introduction}
In this note we introduce a notion of positivity (or non-negativity) for complex Hermitian manifolds which emulates the definition of Griffiths positivity (non-negativity) and refers to the Bismut curvature tensor. Then we investigate its behaviour under the action of the Hermitian curvature flows. 

Let us now introduce our problem giving more details.
The Bismut connection $\nabla^{+}$ on a Hermitian manifold $(M,g,J)$ is the unique Hermitian connection with totally skew-symmetric torsion. It can be defined by the formula 
\begin{equation*}
    g(\nabla^{+}_{X}Y,Z)=g(\nabla^{LC}_{X}Y,Z)+\frac{1}{2}Jd\omega(X,Y,Z)
\end{equation*}
where $\nabla^{LC}$ is the Levi-Civita connection, $\omega$ is the canonical $2$-form associated to $g$ and $J$ acts as $Jd\omega(\cdot,\cdot,\cdot)=-dw(J\cdot,J\cdot,J\cdot)$.

We define a notion of positivity emulating the definition of the Griffiths positivity for the Chern connection of the holomorphic tangent bundle (a.k.a. holomorphic bisectional curvature). We do this by evaluating the holomorphic Bismut bisectional curvature of the Hermitian manifold.
\begin{definition*}
    A Hermitian manifold $(M,g,J)$ has \emph{Bismut-Griffiths-positive} (resp. \emph{non-negative}) curvature if its Bismut curvature tensor $\Omega^{B}$  satisfies
    \begin{itemize}
        \item  $\Omega^B\in\wedge^{1,1}M\otimes\wedge^{1,1}M$;
        \item for any non-zero $\xi,\nu\in T^{1,0}M$  
        \begin{equation*}
            \Omega^{B}(\xi,\overline{\xi},\nu,\overline{\nu})>0 \hspace{1em} (resp. \geq 0)\;.
        \end{equation*}
    \end{itemize} 
\end{definition*}

The condition $\Omega\in\wedge^{1,1}M\otimes\wedge^{1,1}M$ is known in the literature as (Cplx). We note that the curvature tensor $\Omega$ associated to a Hermitian connection satisfies (Cplx) if and only if it satisfies the $J$-invariance formula: $\Omega_{ijk\overline{l}}=0$.\\
We could define our notion of positivity even if (Cplx) were not satisfied; however, in that case we would only describe the geometry of the $(1,1)$ part of $\Omega^{B}$ ignoring the $(2,0)$ and $(0,2)$ components.
\begin{rmk*}
    Given a complex Hermitian manifold $(M,g,J)$, if the metric is pluriclosed, the Bismut curvature tensor (in special coordinate around $z$) become:
    \begin{equation*}
        \Omega^{B}_{i\overline{j}k\overline{l}}=\Omega^{Ch}_{k\overline{l}i\overline{j}} - g^{p\overline{q}}T_{kp\overline{j}}\overline{T_{lq\overline{i}}}
    \end{equation*}
    Thus for SKT manifolds the Bismut-Griffiths positivity (non-negativity) implies the Griffiths positivity (non-negativity).\\
    In \cite{tong} F. Tong studies a positivity notion for the tensor $\Omega^{Ch}_{k\overline{l}i\overline{j}} - g^{p\overline{q}}T_{kp\overline{j}}\overline{T_{lq\overline{i}}}$ which naturally arises from a Bochner-type formula for closed $(1,1)$-forms.
\end{rmk*}

We are interested in the behaviour of the Bismut-Griffiths positivity under the action of the \textit{Hermitian Curvature Flows} (HCFs). In the article \cite{Streets_2011}, Streets and Tian suggest this class of flows as a new class of parabolic flows of metrics on Hermitian manifolds, and proved short time existence and regularity results. These flows have been used to reveal information about the geometry of the varieties, see for example the works of Streets and Tian on the \textit{Pluriclosed flow} \cite{streets2018pluriclosed}, \cite{Streets_2010} and Ustinovskiy \cite{MR3828497}, \cite{Ustinovskiy_2019} and \cite{Ustinovskiy_2020}. In particular, we are motivated by the possibility of detecting some regularization properties as in \cite{Ustinovskiy_2019}. In that article Ustinovskiy showed that there is a flow in the HCF family (which we will call \emph{Ustinovskiy flow}) that not only preserves Griffiths positivity and non-negativity of the Chern connection, but it evolves a metric with non-negative Griffiths curvature everywhere and positive in some point to a metric with positive Griffiths curvature everywhere.\\ 
The HCFs are defined by the equation
\begin{equation*}
        \frac{\partial}{\partial t}g=-S+Q
\end{equation*}
where $S$ is the trace of the Chern curvature tensor $\Omega^{Ch}$
\begin{equation*}
    S_{i\overline{j}}=(Tr_{\omega}\Omega^{Ch})_{i\overline{j}}=g^{k\overline{l}}\Omega^{Ch}_{k\overline{l}i\overline{j}}
\end{equation*}
and $Q$ is a quadratic polynomial in the torsion $T^{Ch}$ of the Chern connection. More precisely, the components of the quadratic term $Q$ are:
\begin{align*}
	&Q^{1}_{i\overline{j}}=g^{k\overline{l}}g^{m\overline{n}}T^{Ch}_{ik\overline{n}}T^{Ch}_{\overline{jl}m}
    &&Q^{2}_{i\overline{j}}=g^{k\overline{l}}g^{m\overline{n}}T^{Ch}_{km\overline{j}}T^{Ch}_{\overline{ln}i} \\
    &Q^{3}_{i\overline{j}}=g^{k\overline{l}}g^{m\overline{n}}T^{Ch}_{ik\overline{l}}T^{Ch}_{\overline{jn}m}
    &&Q^{4}_{i\overline{j}}=\frac{1}{2}g^{k\overline{l}}g^{m\overline{n}}\left(T^{Ch}_{mk\overline{l}}T^{Ch}_{\overline{nj}i}+T^{Ch}_{mi\overline{j}}T^{Ch}_{\overline{nl}k}\right)
\end{align*}

Studying the notion of Bismut-Griffiths positivity, we focus on six dimensional Calabi-Yau solvmanifolds. These are compact quotients of solvable Lie groups endowed with invariant complex structures and with holomorphically trivial canonical bundle. In this class we find examples of manifolds which do not satisfy (Cplx) (see Theorem \ref{th_cplx solvmanifolds}) and among that which satisfy (Cplx) we find Bismut-Griffiths-non-negative manifolds. We also prove that
\begin{theorem*}[Theorem \ref{th_HCF on solvmanifolds}]
    Let $M$ be a six-dimensional Calabi-Yau solvmanifold, then any Hermitian curvature flow preserves (Cplx). Moreover, Hermitian curvature flows preserve Bismut-Griffiths-non-negativity on these manifolds.
\end{theorem*}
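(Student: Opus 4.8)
The plan is to exploit that on a solvmanifold with invariant complex structure the whole problem linearizes to the Lie algebra. First I would observe that the Hermitian curvature flow is equivariant under biholomorphic isometries, so starting from an invariant Hermitian metric the solution stays invariant for as long as it exists; by uniqueness of the ODE this reduces $\partial_t g = -S + Q$ to a flow on the finite-dimensional cone $\mathcal{H}(\mathfrak{g})$ of invariant Hermitian metrics on the Lie algebra $\mathfrak{g}$, with the complex structure $J$ and the structure constants fixed throughout. In this invariant setting the Chern torsion, the terms $S$ and $Q$, and the full Bismut curvature $\Omega^B$ all become explicit rational functions of the metric coefficients and the structure constants. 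I would then invoke the classification of six-dimensional solvmanifolds with holomorphically trivial canonical bundle (Fino--Otal--Ugarte), which leaves a finite list of Lie algebras to treat.

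For the (Cplx) part I would write down the obstruction components $\Omega^B_{ijk\overline{l}}$ in a fixed unitary coframe adapted to $J$. The key claim I would aim to establish, case by case on the classification list, is that the vanishing of these components is a condition on the structure constants of $\mathfrak{g}$ alone, i.e. it is metric-independent on $\mathcal{H}(\mathfrak{g})$. Since the reduced flow fixes $\mathfrak{g}$ and $J$, this immediately gives that (Cplx) holds for all invariant metrics on those algebras for which it holds for one, and hence is preserved; the algebras where it fails are precisely those appearing in Theorem~\ref{th_cplx solvmanifolds}. Should (Cplx) instead cut out a proper subvariety of $\mathcal{H}(\mathfrak{g})$, the fallback is to check that the flow field $-S+Q$ is tangent to that subvariety, but I expect the metric-independent dichotomy to hold.

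For Bismut-Griffiths-non-negativity, assuming (Cplx) is in force, I would consider the non-negative locus $\{ g \in \mathcal{H}(\mathfrak{g}) : \Omega^B_g(\xi,\overline{\xi},\nu,\overline{\nu}) \ge 0 \text{ for all } \xi,\nu \in T^{1,0}M \}$ and prove it is invariant under the reduced flow via a Nagumo/subtangent (ODE maximum-principle) argument: at a boundary metric admitting a null pair $(\xi_0,\nu_0)$ with $\Omega^B_g(\xi_0,\overline{\xi_0},\nu_0,\overline{\nu_0})=0$, it suffices to show that $\frac{d}{dt}\big|_{0}\,\Omega^B_{g(t)}(\xi_0,\overline{\xi_0},\nu_0,\overline{\nu_0}) \ge 0$ when $g$ evolves by $-S+Q$ and the null direction is frozen. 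Substituting the flow equation into the algebraic expression for $\Omega^B$ turns this into a finite list of sign inequalities for the reaction term, one per algebra and uniform in the coefficients parametrizing the HCF family. Here I would exploit the Calabi--Yau constraint: holomorphic triviality of the canonical bundle forces the vanishing, or tracelessness, of the Chern--Ricci form and of certain torsion traces, which is what should pin down the sign.

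The main obstacle is this last sign computation, the ``tensor maximum principle reduced to algebra.'' Showing the reaction term is non-negative at null directions simultaneously for every flow in the HCF family and for every algebra on the list is the genuine content; without the Calabi--Yau torsion constraints the sign is not forced. Before grinding through it I would first test the optimistic shortcut that on some of these algebras the Bismut curvature is either identically zero (Bismut-flat) or manifestly sign-semidefinite for all invariant metrics, in which case preservation on those pieces is automatic and the maximum-principle argument is needed only on the remaining cases.
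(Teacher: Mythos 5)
Your overall skeleton --- equivariance of the HCF under the symmetry group, reduction to an ODE on the finite-dimensional cone of invariant metrics with $J$ and the structure constants frozen, and a case-by-case run through the classification list --- is exactly the paper's setup. The divergence is in your two key claims. First, the ``metric-independent dichotomy'' you expect for (Cplx) is false: by Theorem \ref{th_cplx solvmanifolds} and Table \ref{table:Sum up}, in the families (Nii), (Si), (Siii1) and (Siv3) the condition $\Omega^B_{ijk\overline{l}}=0$ cuts out a proper subvariety of the invariant metrics (namely $v=0$, resp.\ $u=v=z=0$), so your ``fallback'' of checking that $-S+Q$ is tangent to that subvariety is not a contingency but the actual content of the proof; the paper carries it out in Remarks \ref{rmk: 1}--\ref{rmk: 5} by computing that $S$ and $Q$ are diagonal (resp.\ satisfy the same vanishing) whenever the metric does. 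Second, for Bismut-Griffiths-non-negativity the paper never needs the Nagumo/null-direction maximum principle you plan around: in every case of the list where (Cplx) holds, the non-negative locus inside the (Cplx) locus is either all of it, depending only on the structure constants (e.g.\ $\mathfrak{h}_3$ with $D=1$, $\mathfrak{h}_8$, (Siii1)), or is given by the single extra algebraic condition $u=0$ (the case $\mathfrak{h}_2$), and these loci are preserved by the very same diagonality computations. In other words, your closing ``optimistic shortcut'' is not a shortcut --- it is the entire argument --- and the sign analysis at null pairs that you flag as the main obstacle, and whose claimed source (Calabi--Yau torsion/trace constraints) is in any case speculative, simply does not arise. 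What your heavier route would buy, if carried out, is a template not tied to the explicit classification; what the paper's route buys is that everything reduces to finitely many exact symbolic computations with no inequality left to verify.
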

We also study our problem on linear Hopf manifolds. The linear Hopf surface with its standard metric ($g_{H}$, see \S \ref{sec_Hopf}) has flat Bismut curvature, thus it is our first example of Bismut-Griffiths-non-negative manifold. We also find other (non flat) examples of metrics with Bismut-Griffiths-non-negative curvature on linear Hopf manifolds of higher dimension. 

Our results come from the analysis of the class of $g(\alpha,\beta)$ metrics on linear Hopf manifolds (see $\S$ \ref{sec_Hopf}). These are all the homogeneous metrics on linear Hopf manifolds of dimension greater than two, while in dimension two they are all the $S^1 \times U(2)$-invariant metrics on the Hopf surface (see Proposition \ref{prop: homogeneous}). This ensures that they are closed by the action of any HCF and naturally arise performing the HCFs on the standard metric $g_{H}$. We prove
\begin{theorem*}[Proposition \ref{prop: Cplx satisfied on Hopf} \& Corollary \ref{cor: HCF preseves Cplx on Hopf}]
    On a linear Hopf manifold equipped with a $g(\alpha,\beta)$ metric the Bismut curvature tensor satisfies (Cplx). Moreover, on these manifolds any HCF starting from those metrics preserves (Cplx).
\end{theorem*}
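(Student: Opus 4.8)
The plan is to reduce the first statement to an explicit tensor computation on the universal cover. Lifting to $\mathbb{C}^n\setminus\{0\}$, the $g(\alpha,\beta)$ metric has a global expression in the coordinates $z_1,\dots,z_n$, splitting into a ``round'' piece weighted by $\alpha$ and a ``radial'' piece weighted by $\beta$. In view of the Remark preceding the statement, verifying (Cplx) amounts to checking the $J$-invariance identity $\Omega^B_{ijk\overline l}=0$. Since the $g(\alpha,\beta)$ metrics are invariant under a large group of holomorphic isometries (the $U(n)$-action on the sphere factor together with the radial action), I would exploit this symmetry to reduce the verification to a single convenient point, e.g. $z=(r,0,\dots,0)$, where the metric coefficients and their first derivatives are diagonal and the residual stabilizer $U(n-1)\times U(1)$ severely constrains which components of $\Omega^B$ can survive.

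First I would compute the torsion of the Bismut connection, i.e. the totally skew $3$-form $Jd\omega$ of $g(\alpha,\beta)$, and then assemble the Bismut curvature either from the defining relation $g(\nabla^{+}_XY,Z)=g(\nabla^{LC}_XY,Z)+\tfrac{1}{2}Jd\omega(X,Y,Z)$ via $\Omega^B(X,Y)=[\nabla^{+}_X,\nabla^{+}_Y]-\nabla^{+}_{[X,Y]}$, or by relating the non-$(1,1)$ part of $\Omega^B$ to $\partial$ and $\overline\partial$ of the torsion form. The key point to keep in mind is that, unlike the Chern curvature, the Bismut curvature need not be of type $(1,1)$ a priori, so the vanishing of $\Omega^B_{ijk\overline l}$ is a \emph{genuine} condition rather than a formality: the main obstacle is precisely the bookkeeping of these $(2,0)$-in-the-first-pair components. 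I expect the homogeneity to make the relevant derivatives explicit enough that the $\alpha$- and $\beta$-contributions cancel and force $\Omega^B_{ijk\overline l}=0$ at the chosen point, hence everywhere by invariance.

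For the Corollary the argument is formal. By Proposition~\ref{prop: homogeneous} the $g(\alpha,\beta)$ metrics are exactly the homogeneous metrics when $n>2$, respectively the $S^1\times U(2)$-invariant metrics when $n=2$, on the linear Hopf manifold. The HCF $\tfrac{\partial}{\partial t}g=-S+Q$ is natural, hence equivariant under holomorphic isometries; together with uniqueness of the solution this shows that the isometry group of the initial datum is preserved along the flow. Therefore a flow issuing from a $g(\alpha,\beta)$ metric remains within this family for all $t$, and since every member of the family satisfies (Cplx) by the Proposition, (Cplx) is preserved along any such HCF.
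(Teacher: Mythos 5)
Your proposal is correct and follows essentially the same route as the paper: the Proposition is established there by writing down the Bismut Christoffel symbols of $g(\alpha,\beta)$ explicitly in global coordinates on $\mathbb{C}^n\setminus\{0\}$ and verifying $\Omega^B_{ijk\overline{l}}=0$ by direct computation (your symmetry reduction to a single point is only a mild streamlining of the same calculation), and the Corollary is deduced exactly as you do, from the fact that HCFs preserve $S^1\times U(n)$-invariance and hence keep the flow inside the $g(\alpha,\beta)$ family, every member of which satisfies (Cplx).
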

We characterize the metrics $g(\alpha,\beta)$ which have Bismut-Griffiths-non-negative curvature (\S \ref{sec_canonical metrics on LHM}); then we give a description of the evolution of the HCFs in these family through a stability result (Theorem \ref{th_stability}), so we prove
\begin{theorem*}[Theorem \ref{Th inequality all dimensions} \& Proposition \ref{pr_sharpness}]
    There exists a class in the family HCF of flows which do not preserve Bismut-Griffiths-non-negativity; all the others HCFs preserve Bismut-Griffiths-non-negativity on linear Hopf manifolds equipped with  $g(\alpha,\beta)$ metrics.
\end{theorem*}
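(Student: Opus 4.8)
The plan is to exploit that the two-parameter family $g(\alpha,\beta)$ is closed under every HCF, so that each flow reduces to a planar ODE and the geometric question becomes the forward-invariance of a region in the $(\alpha,\beta)$-plane. Concretely, I would first fix a representative linear Hopf manifold of dimension $n$ and compute, for a generic metric $g(\alpha,\beta)$, the Chern curvature trace $S$ and the four torsion quadratics $Q^1,\dots,Q^4$ entering the HCF. Because $S$ and each $Q^i$ must again be of the form $g(\alpha',\beta')$ (this is exactly what makes the family HCF-closed, as guaranteed by Proposition \ref{prop: homogeneous}), reading off their coefficients expresses a general HCF as a vector field
\[
    (\dot\alpha,\dot\beta)=V_{\mathbf{c}}(\alpha,\beta),\qquad \mathbf{c}=(c_1,c_2,c_3,c_4),
\]
where $\mathbf{c}$ records the linear combination of the $Q^i$ defining the particular flow. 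By Proposition \ref{prop: Cplx satisfied on Hopf} and Corollary \ref{cor: HCF preseves Cplx on Hopf} the condition (Cplx) holds and is preserved throughout, so along the whole flow the only thing to monitor is the bisectional inequality.

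Next I would use the characterization of Bismut-Griffiths-non-negativity from \S \ref{sec_canonical metrics on LHM} to describe the admissible set as a region $R=\{P(\alpha,\beta)\ge 0\}$ cut out by an explicit inequality $P$, with boundary $\partial R=\{P=0\}$. The statement that a flow preserves Bismut-Griffiths-non-negativity then translates into the forward-invariance of $R$ under $V_{\mathbf{c}}$, and, by the standard tangency criterion, this is equivalent to the sign condition
\[
    \langle \nabla P,\,V_{\mathbf{c}}\rangle\ \ge\ 0 \qquad \text{on } \partial R.
\]
Computing this boundary derivative is the crux: it is an explicit expression affine in $\mathbf{c}$, so its sign is governed by a single threshold, i.e.\ a hyperplane in the space of HCF coefficients. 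Theorem \ref{Th inequality all dimensions} is precisely the assertion that on one side of this threshold — for all flows except a distinguished class — the derivative is $\ge 0$, and I would establish it by the inequality analysis there, checking that the conclusion is uniform in the dimension $n$.

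Finally, to upgrade this instantaneous invariance to genuine preservation of non-negativity along the entire flow I would invoke the stability result Theorem \ref{th_stability}, which controls the qualitative picture of the planar ODE — its equilibria and the direction of trajectories in the $(\alpha,\beta)$-plane — and rules out trajectories escaping $R$. For the complementary class, Proposition \ref{pr_sharpness} is proved by exhibiting an explicit HCF from that class together with an explicit metric $g(\alpha_0,\beta_0)\in\partial R$ at which $\langle\nabla P,V_{\mathbf{c}}\rangle<0$, so that the trajectory immediately leaves $R$; this simultaneously shows that a failing class exists and that the threshold found above is sharp. The main obstacle I anticipate is the explicit curvature-and-torsion computation carried out uniformly in $n$, and verifying that the boundary sign condition genuinely depends only on $\mathbf{c}$ and not on $n$ in a way that would blur the single threshold, since it is this uniformity that isolates one distinguished class across all dimensions.
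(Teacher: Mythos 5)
Your proposal is correct and follows essentially the same route as the paper: the family $g(\alpha,\beta)$ is HCF-closed and reduces each flow to the planar ODE of Proposition \ref{pr_general ODE}, the admissible region is exactly the cone $\{\beta\le\gamma_n\alpha\}$ from Proposition \ref{Lemma_gamma-n}, the boundary sign condition is the linear threshold $F(a,b,c,d,n)\le n\frac{\gamma_n}{\gamma_n+1}$, and sharpness is obtained by starting at $\gamma_0=\gamma_n$ and checking $\dot\gamma(0)>0$. The only cosmetic differences are that the paper reduces further to the single scalar $\gamma=\beta/\alpha$ and derives invariance from the monotone convergence of $\gamma$ to $\frac{F}{n-F}$ (Theorem \ref{th_stability}) rather than from a Nagumo-type tangency criterion, and that the threshold (hence the distinguished failing class) genuinely depends on $n$ --- e.g.\ the pluriclosed flow passes for $n=2$ but fails for $n>2$ --- so the uniformity in $n$ you worried about is not needed for the statement as given.
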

Finally, we check these conditions on some interesting Hermitian curvature flows, such as the Ustinovskiy flow and the pluriclosed and the \emph{gradient} flows of Streets and Tian (the latter is the only HCF which is a gradient flow for some functional $\mathbb{F}$, see \cite{Streets_2011}). In particular, the Ustinovskiy flow does not preserve the Bismut-Griffiths-non-negativity, while the gradient flow does on linear Hopf manifolds with $g(\alpha,\beta)$ metrics; the pluriclosed flow preserves the Bismut-Griffiths-non-negativity on the Hopf surface while it does not on linear Hopf manifolds of higher dimension. See \S \ref{sec_interesting flows} for details.

\section{Bismut-Griffiths-positivity of 6-dimensional Calabi-Yau solvmanifolds}\label{sec_solvmanifolds}
In this section we analyze the symmetries of (Cplx) and the notion of Bismut-Griffiths-positivity by investigating them on 6-dimensional Calabi-Yau solvmanifolds. By solvmanifold we mean a compact quotient of a connected simply-connected solvable Lie group by a co-compact discrete subgroup. We endow it with a Hermitian structure $(g,J)$ which is invariant (under left-translations) when lifted to the universal cover; moreover, we ask these solvmanifolds to be Calabi-Yau, that is, the complex structure $J$ is such that the canonical bundle is holomorphically-trivial. This includes nilmanifolds with invariant complex structures.\\
We refer to the classification (up to linear equivalence) of the invariant complex structures on six-dimensional nilmanifolds (Table 1) and solvmanifolds non-nilmanifolds with holomorphically-trivial canonical bundle (Table 2) as outlined in the works of Salamon, Ugarte, Villacampa, Andrada, Barberis, Dotti, Ceballos and Otal  \cite{Salamon_2001}, \cite{Ugarte_2007}, \cite{Andrada_2011}, \cite{Ugarte_2014}, \cite{article}. 

\begin{table}[h!]
\renewcommand*{\arraystretch}{1.6}
\begin{center}
{\resizebox{\textwidth}{!}{
\begin{tabular}{|c|c|l|}
\hline
\textbf{Name}&\textbf{Complex structure}&\textbf{Lie algebra}\\ \hline\hline
\multirow{2}{*}{\text{(Np)}}&\multirow{2}{*}{$d\varphi^1=d\varphi^{2} = 0,\,\, d\varphi^3=\rho\, \varphi^{12},\,\, \text{ where }\rho\in\{0,1\}$}& $\rho=0: \mathfrak{h}_1=(0,0,0,0,0,0)$  \\ \cline{3-3}
&&$\rho=1: \mathfrak{h}_5=(0,0,0,0,13+42,14+23)$\\ \hline  \hline
\multirow{6}{*}{\text{(Ni)}}&\multirow{2}{*}{$d\varphi^1=d\varphi^{2} = 0$,}&$\mathfrak{h}_2=(0,0,0,0,12,34)$ \\ \cline{3-3}
&& $\mathfrak{h}_3=(0,0,0,0,0,12+34)$ \\ \cline{3-3}
&$d\varphi^3= \rho\, \varphi^{12} + \varphi^{1\bar1} + \lambda\,\varphi^{1\bar 2} + D\,\varphi^{2\bar2}$,& $\mathfrak{h}_4=(0,0,0,0,12,14+23)$ \\ \cline{3-3}
&& $\mathfrak{h}_5=(0,0,0,0,13+42,14+23)$ \\ \cline{3-3}
&where $\rho\in\{0,1\}, \lambda\in\mathbb R^{\geq0}, D\in\mathbb C \text{ with }\Im D\geq0 $& $\mathfrak{h}_6=(0,0,0,0,12,13)$ \\ \cline{3-3}
&& $\mathfrak{h}_8=(0,0,0,0,0,12)$ \\ \hline \hline

\multirow{9}{*}{\text{(Nii)}}&\multirow{3}{*}{$d\varphi^1=0, \quad d\varphi^{2}=\varphi^{1\bar1}$,}&$\mathfrak{h}_7=(0,0,0,12,13,23)$ \\ \cline{3-3}
&& $\mathfrak{h}_9=(0,0,0,0,12,14+25)$ \\ \cline{3-3}
&& $\mathfrak{h}_{10}=(0,0,0,12,13,14)$ \\ \cline{3-3}
&\multirow{2}{*}{$d\varphi^3=\rho\varphi^{12}+B\,\varphi^{1\bar2}+c\,\varphi^{2\bar1}$,}& $\mathfrak{h}_{11}=(0,0,0,12,13,14+23)$ \\ \cline{3-3}
&& $\mathfrak{h}_{12}=(0,0,0,12,13,24)$ \\ \cline{3-3}
&\multirow{3}{*}{where $\rho\in\{0,1\}, B \in \mathbb C, c\in\mathbb R^{\geq0} , \text{ with } (\rho,B,c)\neq(0,0,0)$}& $\mathfrak{h}_{13}=(0,0,0,12,13+14,24)$ \\ \cline{3-3}
&& $\mathfrak{h}_{14}=(0,0,0,12,14,13+42)$ \\ \cline{3-3}
&& $\mathfrak{h}_{15}=(0,0,0,12,13+42,14+23)$\\ \cline{3-3}
&& $\mathfrak{h}_{16}=(0,0,0,12,14,24)$ \\ \hline \hline
\multirow{2}{*}{\text{(Niii)}}&$d\varphi^1=0,\quad d\varphi^{2} =\varphi^{13} + \varphi^{1\bar 3}$,&$\mathfrak{h}_{19}^-=(0,0,0,12,23,14-35)$ \\ \cline{3-3}
&$d\varphi^3=\sqrt{-1}\rho\, \varphi^{1\bar1}\pm \sqrt{-1}(\varphi^{1\bar2} - \varphi^{2\bar1}),\quad \text{ where }\rho\in\{0,1\}$&$\mathfrak{h}_{26}^+=(0,0,12, 13,23,14+25)$ \\ \hline  \hline
 \end{tabular}
 }}
 \caption{Invariant complex structures on six-dimensional nilmanifolds up to linear equivalence, see \cite{Andrada_2011}, \cite{article}, \cite{Ugarte_2014}.}
\end{center}
\end{table}

\begin{table}[h!]
\renewcommand*{\arraystretch}{1.6}
\begin{center}
{\resizebox{\textwidth}{!}{
\begin{tabular}{|c|c|l|}
\hline
\textbf{Name}&\textbf{Complex structure}&\textbf{Lie algebra}\\ \hline\hline
\multirow{3}{*}{\text{(Si)}}&$d\varphi^1=A\varphi^{13}+A\varphi^{1\bar3}$,&$\mathfrak{g}_1=(15,-25,-35,45,0,0)$  when $\theta=0$  \\ \cline{3-3} 
&$d\varphi^{2}=-A\varphi^{23}-A\varphi^{2\bar3},\quad d\varphi^3=0$,& $\mathfrak{g}_2^{\alpha}=(\alpha \times 15+25, -15+\alpha\times 25, -\alpha\times 35+45, -35-\alpha\times 45,0,0)$\\
&where $A = \cos\theta+\sqrt{-1}\sin\theta, \theta\in[0,\pi)$&with $\alpha=\frac{\cos\theta}{\sin\theta}\geq0$, when $\theta\neq0$\\ \hline  \hline
\multirow{3}{*}{\text{(Sii)}}&$d\varphi^1=0,\quad d\varphi^{2}=-\frac{1}{2}\varphi^{13}-\left(\frac{1}{2}+\sqrt{-1} x\right) \varphi^{1\bar3}+\sqrt{-1}x\,\varphi^{3\bar1},$&\multirow{3}{*}{$\mathfrak{g}_3=(0, -13, 12, 0, -46, -45)$} \\
&$d\varphi^3=\frac{1}{2}\varphi^{12}+\left(\frac{1}{2}-\frac{\sqrt{-1}}{4x}\right)\varphi^{1\bar2}+\frac{\sqrt{-1}}{4x}\varphi^{2\bar1}$,&  \\
&where $x\in\mathbb R^{>0}$&\\ \hline \hline
\multirow{3}{*}{\text{(Siii1)}}&$d\varphi^1=\sqrt{-1}\varphi^{13}+\sqrt{-1}\varphi^{1\bar3}$&\multirow{3}{*}{$\mathfrak{g}_4=(23, -36, 26, -56, 46, 0)$} \\
&$d\varphi^{2}=-\sqrt{-1}\varphi^{23}-\sqrt{-1}\varphi^{2\bar3}$&  \\
&$d\varphi^3=\pm \varphi^{1\bar1}$&\\ \hline
\multirow{3}{*}{\text{(Siii2)}}&$d\varphi^1=\varphi^{13}+\varphi^{1\bar3}$&\multirow{3}{*}{$\mathfrak{g}_5=(24 + 35, 26, 36, -46, -56, 0)$} \\
&$d\varphi^{2}=-\varphi^{23}-\varphi^{2\bar3}$&  \\
&$d\varphi^3=\varphi^{1\bar2}+\varphi^{2\bar1}$&\\ \hline
\multirow{3}{*}{\text{(Siii3)}}&$d\varphi^1=\sqrt{-1}\varphi^{13}+\sqrt{-1}\varphi^{1\bar3}$&\multirow{3}{*}{$\mathfrak{g}_6=(24 + 35, -36, 26, -	56, 46, 0)$} \\
&$d\varphi^{2}=-\sqrt{-1}\varphi^{23}-\sqrt{-1}\varphi^{2\bar3}$&  \\
&$d\varphi^3=\varphi^{1\bar1}+\varphi^{2\bar2}$&\\ \hline
\multirow{3}{*}{\text{(Siii4)}}&$d\varphi^1=\sqrt{-1}\varphi^{13}+\sqrt{-1}\varphi^{1\bar3}$&\multirow{3}{*}{$\mathfrak{g}_7=(24 + 35, 46, 56, -26, -36, 0)$} \\
&$d\varphi^{2}=-\sqrt{-1}\varphi^{23}-\sqrt{-1}\varphi^{2\bar3}$&  \\
&$d\varphi^3=\pm(\varphi^{1\bar1}-\varphi^{2\bar2})$&\\ \hline  \hline
\text{(Siv1)}&$d\varphi^1=-\varphi^{13}, \quad d\varphi^{2}=\varphi^{23},\quad d\varphi^3=0$&\multirow{6}{*}{$\mathfrak{g}_8=(16-25, 15+26, -36+45, -35-46, 0, 0)$} \\ \cline{1-2}
\multirow{2}{*}{\text{(Siv2)}}&$d\varphi^1=2\sqrt{-1}\varphi^{13}+\varphi^{3\bar3},\quad x \in \{0,1\}$ & \\ &$d\varphi^{2}=-2\sqrt{-1}\varphi^{23}+x\,\varphi^{3\bar3}, \quad d\varphi^3=0$&\\ \cline{1-2}
\multirow{3}{*}{\text{(Siv3)}}&$d\varphi^1=A\,\varphi^{13}-\varphi^{1\bar3}$ & \\ &$d\varphi^{2}=-A\,\varphi^{23}+\varphi^{2\bar3}, \quad d\varphi^3=0$&\\
&$ A \in \mathbb C \text{ with } |A| \neq 1$&\\ \hline \hline
\multirow{3}{*}{\text{(Sv)}}&$d\varphi^1=-\varphi^{3\bar3}$&\multirow{3}{*}{$\mathfrak{g}_9=(45, 15 + 36, 14 - 26 + 56, -56, 46, 0)$} \\
&$d\varphi^{2}=\frac{\sqrt{-1}}{2}\varphi^{12}+\frac{1}{2}\varphi^{1\bar3}-\frac{\sqrt{-1}}{2}\varphi^{2\bar1}$&  \\
&$d\varphi^3=-\frac{\sqrt{-1}}{2}\varphi^{13}+\frac{\sqrt{-1}}{2}\varphi^{3\bar1}$&\\ \hline \hline
 \end{tabular}
 }}
 \caption{Invariant complex structures on six-dimensional solvmanifolds non-nilmanifolds with holomorphically-trivial canonical bundle up to linear equivalence, see \cite{ota14}, \cite{MR3436163}.}
 \label{table:solv-cplx}
\end{center}
\end{table}

In the formulas above the authors refer to a co-frame $(\varphi^{1},\varphi^{2},\varphi^{3},\overline{\varphi}^{1},\overline{\varphi}^{2},\overline{\varphi}^{3})$ where $(\varphi^{1},\varphi^{2},\varphi^{3})$ is an invariant co-frame of $(1,0)$-forms with respect to $J$.\\
The generic invariant Hermitian structure $\omega = g(J\cdot,\cdot)$ is given by
\begin{equation}\label{meetric on nilmanifolds}
    2 \omega = \sqrt{-1}(r^{2}\varphi^{1\overline{1}} + s^{2}\varphi^{2\overline{2}} + t^{2}\varphi^{3\overline{3}}) + 
    u\varphi^{1\overline{2}} - \overline{u}\varphi^{2\overline{1}} + 
    v\varphi^{2\overline{3}} - \overline{v}\varphi^{3\overline{2}} +
    z\varphi^{1\overline{3}} - \overline{z}\varphi^{3\overline{1}}
\end{equation}
where $\varphi^{i\overline{j}}=\varphi^{i}\land \overline{\varphi}^{j}$ and the coefficients satisfy the following inequalities coming from the fact that $g$ is positive definite (see \cite{Ugarte_2007}):
    $$r^{2}>0, \hspace{0.4em} s^{2}>0, \hspace{0.4em} t^{2}>0$$
    $$r^{2}s^{2}>|u|^{2}, \hspace{0.4em} r^{2}t^{2}>|z|^{2}, \hspace{0.4em} s^{2}t^{2}>|v|^{2}$$
    $$8\sqrt{-1}\det\Xi=r^{2}s^{2}t^{2}+2Re(\sqrt{-1}\overline{uv}z)-(r^{2}|v|^{2}+t^{2}|u|^{2}+s^{2}|z|^{2})>0$$
where, $\Xi$ denotes the Hermitian matrix associated to the Hermitian structure, i.e.
\begin{equation*}
    \Xi=
        \begin{pmatrix}
            \sqrt{-1}\frac{r^{2}}{2} & \frac{u}{2} & \frac{z}{2} \\
            -\frac{\overline{u}}{2} & \sqrt{-1}\frac{s^{2}}{2} & \frac{v}{2} \\
            -\frac{\overline{z}}{2} & -\frac{\overline{v}}{2} & \sqrt{-1}\frac{t^{2}}{2} 
        \end{pmatrix}
\end{equation*}

Analyzing case by case the possible families of nilmanifolds and solvmanifolds, we get the following results, whose proofs are collected in the Appendix.
\begin{theorem}\label{th_cplx solvmanifolds}
    Let $M$ be a six-dimensional solvmanifold endowed with invariant metric $g$ and complex structure $J$, $g$ as in (\ref{meetric on nilmanifolds}) and  $J$ such that the canonical bundle is holomorphically-trivial. The Bismut curvature tensor satisfies the (Cplx) condition precisely in the cases (Np), (Ni), (Nii), (Si), (Siii1), (Siv1) and (Siv3) when the conditions on the invariant structures of Table \ref{table:Sum up} are satisfied.
\end{theorem}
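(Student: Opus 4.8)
The plan is to turn the statement into a finite linear-algebra computation on each Lie algebra appearing in Tables 1 and 2. Since the metric $g$, the complex structure $J$, the torsion $3$-form $Jd\omega$, and hence the full Bismut connection $\nabla^{+}$ are all left-invariant, they are determined algebraically by the structure constants read off from the equations $d\varphi^{i}$ together with the parameters $r,s,t,u,v,z$ of (\ref{meetric on nilmanifolds}). The first point I would record is that for \emph{any} Hermitian connection the ``endomorphism part'' of the curvature is automatically of type $(1,1)$: because $\nabla^{+}J=0$, the operator $\Omega^{B}(X,Y)$ commutes with $J$, so $\Omega^{B}_{ijkl}=\Omega^{B}_{ij\overline{k}\overline{l}}=0$ with no hypotheses. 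Consequently (Cplx), i.e. $\Omega^{B}\in\wedge^{1,1}M\otimes\wedge^{1,1}M$, is equivalent to the vanishing of the pure ``form part'' components $\Omega^{B}_{ijk\overline{l}}=0$ (and their conjugates $\Omega^{B}_{\overline{i}\overline{j}k\overline{l}}=0$) -- exactly the $J$-invariance formula quoted in the text. These are the only components that are not automatic, so the whole theorem is the computation of this single family of components.

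To compute them efficiently I would compare $\nabla^{+}$ with the Chern connection rather than expand the Bismut curvature from scratch. Both $\nabla^{+}$ and $\nabla^{Ch}$ are Hermitian, so the difference $\nabla^{+}-\nabla^{Ch}$ is a $1$-form valued in $\mathfrak{u}(T^{1,0}M)$, expressible explicitly in the Chern torsion $T^{Ch}$. Since the Chern curvature has vanishing $(2,0)$ form-part, writing $A$ for the $(1,0)$-part of this difference gives
\begin{equation*}
    \Omega^{B}_{ijk\overline{l}}=\left(\partial^{Ch}A+A\wedge A\right)_{ijk\overline{l}},
\end{equation*}
where $\partial^{Ch}$ is the $(1,0)$-part of the Chern covariant exterior derivative; only $A^{1,0}$ survives in the $(2,0)$ form-part. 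Thus (Cplx) holds precisely when this $(2,0)$-form-valued endomorphism vanishes, a condition purely in $T^{Ch}$ and its Chern derivative. For invariant structures $A$, $\partial^{Ch}A$ and $A\wedge A$ are all constant and algebraic, so for each family the obstruction collapses to a short list of polynomials in the structure constants and the metric parameters, which I would organize using conjugation symmetry and antisymmetry in $(i,j)$ to cut the number of independent components.

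Then comes the case analysis. For (Np), (Ni) and (Nii) one has $d\varphi^{1}=d\varphi^{2}=0$ (resp.\ $d\varphi^{1}=0$), so most terms entering $A$ and $\partial^{Ch}A$ vanish and the few surviving components are checked directly; these are the families where (Cplx) survives, and from the residual equations I would read off the parameter conditions recorded in Table \ref{table:Sum up}. The family (Niii) and the non-nilpotent families (Si)--(Sv) have $d\varphi^{i}$ with simultaneous $(2,0)$ and $(1,1)$ parts, so the torsion is genuinely nontrivial and the obstruction components are the interesting ones: for each I would compute $\Omega^{B}_{ijk\overline{l}}$ explicitly and solve for when they vanish, which singles out (Si), (Siii1), (Siv1), (Siv3) under parameter conditions and eliminates the remaining families by exhibiting, in each excluded case, at least one component nonzero for every admissible metric.

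The hard part will be the solvmanifold cases, where the mixed-type structure equations make $A$, $\partial^{Ch}A$ and $A\wedge A$ all contribute and where the answer genuinely depends on the metric parameters and not only on the complex structure: pinning down the \emph{exact} conditions of Table \ref{table:Sum up} (rather than a cruder sufficient condition) requires carrying the full parameter-dependence through and then testing nonvanishing against the positive-definiteness inequalities for $\Xi$. Because the claim asserts ``precisely in the cases\dots'', both directions must be verified family by family; this is a finite but sizeable verification, best handled with symbolic computation together with the conjugation and antisymmetry reductions above to keep the independent components manageable.
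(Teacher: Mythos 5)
Your reduction is the same one the paper uses: for a Hermitian connection the endomorphism part of $\Omega^{B}$ is automatically $\mathfrak{u}(n)$-valued, so (Cplx) is exactly the vanishing of the mixed components $\Omega^{B}_{ijk\overline{l}}$, and the theorem becomes a finite, family-by-family symbolic verification in the structure constants and the parameters $r,s,t,u,v,z$ subject to the positivity constraints on $\Xi$. This is precisely what the paper does in its Appendix (it computes the Bismut Christoffel symbols and curvature components directly with Sage, rather than via your $\nabla^{+}=\nabla^{Ch}+A$ decomposition, but that is an organizational choice, not a different argument). The one caveat is that your write-up stops at the plan: the entire substance of the proof of the ``precisely'' claim is the explicit list of obstructing components and the solution of the resulting polynomial systems in each of the twelve families (e.g.\ showing in (Nii) that $c=B=0$, $\rho=1$, $v=0$ is forced, or eliminating (Siii2), (Siv2), (Sv) outright), and none of that is carried out here; as it stands the proposal is a correct strategy rather than a proof.
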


\begin{table}[h!]
\renewcommand*{\arraystretch}{1.6}
\begin{center}
{\resizebox{0.80\textwidth}{!}{
\begin{tabular}{|c|c|l|}
\hline
\textbf{Name}&\textbf{(Cplx) condition}&\textbf{Bismut-Griffiths-non-negativity}\\ \hline\hline
\multirow{2}{*}{\text{(Np)}}& Always satisfied & $\rho=0:$ flat\\
& & $\rho=1: $ nowhere non-negative nor non-positive\\ \hline  
\multirow{6}{*}{\text{(Ni)}}& &$\mathfrak{h}_{2}$: non-negative if $u=0$\\
& & $\mathfrak{h}_{3}$, $D=1$: non-negative \\
& & $\mathfrak{h}_{3}$, $D=-1$: nowhere non-negative nor non-positive \\
& $\rho=0$ & $\mathfrak{h}_{4}$: nowhere non-negative nor non-positive\\
& & $\mathfrak{h}_{5}$: nowhere non-negative nor non-positive\\
& & $\mathfrak{h}_{8}$: non-negative\\
\hline
\multirow{1}{*}{\text{(Nii)}}&$c=B=0$, $\rho=1$, $v=0$ & nowhere non-negative nor non-positive\\ 
\hline
\hline 
\multirow{2}{*}{\text{(Si)}}& $u=v=z=0$ & $A=\sqrt{-1}$: flat\\
& & $A\neq \sqrt{-1}$: nowhere non-negative nor non-positive\\
\hline
\multirow{1}{*}{\text{(Siii1)}}& $u=v=z=0$ &  non-negative \\
\hline
\multirow{1}{*}{\text{(Siv1)}}& Always satisfied & nowhere non-negative nor non-positive  \\ 
\hline
\multirow{2}{*}{\text{(Siv3)}}& $u=v=z=0$ &  nowhere non-negative nor non-positive \\& $A=0$, $v=z=0$ & in both cases\\
\hline
\hline 

 \end{tabular}
 }}
 \caption{Conditions on the underlying complex structure, invariant Hermitian metric and Lie algebra.}
 \label{table:Sum up}
\end{center}
\end{table}

\begin{rmk*}
    In \cite{angella2018gauduchon} the authors studied the existence of Gauduchon K{\"a}hler-like connections on 6-dimensional Calabi-Yau solvmanifolds. The Gauduchon connections are an affine line of Hermitian connections which goes through the Chern and the Bismut connections. Moreover, K{\"a}hler-like means that the curvature tensor satisfies both (Cplx) and the first Bianchi identity. Examples on the Hopf manifolds show that the K{\"a}hler-like condition is strictly stronger than (Cplx).
\end{rmk*}

In the cases where (Cplx) is satisfied we look at the holomorphic bisectional Bismut curvature.
\begin{theorem}\label{prop_positivity solvmanifolds}
    Let $M$ be a six-dimensional solvmanifold endowed with invariant metric $g$ and complex structure $J$, $g$ as in (\ref{meetric on nilmanifolds}) and  $J$ such that the canonical bundle is holomorphically-trivial. If $J$ is in the families (Siii1) or (Ni) with Lie algebra $\mathfrak{h}_{2}$, $\mathfrak{h}_{8}$ and $\mathfrak{h}_{3}$ (with $D=1$)  then the Bismut curvature tensor is Bismut-Griffiths non-negative. If $J$ is in the family (Si) with Lie algebra $\mathfrak{g}_{2}^{0}$ and diagonal metric the manifold is Bismut-flat. In all the other cases where (Cplx) is satisfied the invariant metrics are neither non-positive nor non-negative. (See Table \ref{table:Sum up})
\end{theorem}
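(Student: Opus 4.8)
The plan is to build directly on Theorem \ref{th_cplx solvmanifolds}: outside the list it produces, the $(2,0)$ and $(0,2)$ components of $\Omega^B$ obstruct a clean bisectional quantity, so I analyze only the (Cplx) cases (Np), (Ni), (Nii), (Si), (Siii1), (Siv1), (Siv3) under the constraints recorded in Table \ref{table:Sum up}. Since both $g$ and $J$ are left-invariant, every component $\Omega^B_{i\overline{j}k\overline{l}}$ is \emph{constant} on the Lie group and is determined algebraically by the structure constants read off from the equations for $d\varphi^i$ together with the metric coefficients $r,s,t,u,v,z$. The first step is therefore to compute, on each Lie algebra, the Bismut connection from its defining formula: extract the totally skew torsion $\tfrac12 Jd\omega$ and the Levi-Civita part via the Koszul formula (both purely combinatorial in the coframe $(\varphi^1,\varphi^2,\varphi^3)$), assemble the connection matrix $\theta$, and then evaluate $\Omega^B=d\theta+\theta\wedge\theta$. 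The central object is the quartic Hermitian form $H(\xi,\nu)=\Omega^B_{i\overline{j}k\overline{l}}\,\xi^i\overline{\xi}^{\,j}\nu^k\overline{\nu}^{\,l}$ on pairs of vectors $\xi,\nu\in T^{1,0}M\cong\mathbb{C}^3$, whose sign we must control.

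For the cases asserted to be Bismut-Griffiths-non-negative --- the family (Siii1) and (Ni) with $\mathfrak{h}_2$, $\mathfrak{h}_8$, and $\mathfrak{h}_3$ at $D=1$ --- the (Cplx) conditions force $u=v=z=0$, so the metric is diagonal and the curvature simplifies substantially. I would then exhibit a manifestly non-negative presentation of $H$: the surviving components should organize into a positive-coefficient combination of terms $|\,\cdot\,|^2$, or of products of two separately positive-semidefinite Hermitian forms in $\xi$ and in $\nu$, yielding $H(\xi,\nu)\geq 0$ for all $\xi,\nu$. For the distinguished case (Si) with $\mathfrak{g}_2^0$ and diagonal metric, the claim is stronger; here $A=\sqrt{-1}$ (i.e. $\theta=\pi/2$) collapses the structure equations to a Bismut-flat model, and I expect all components $\Omega^B_{i\overline{j}k\overline{l}}$ to vanish identically, establishing Bismut-flatness.

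For each remaining (Cplx) case --- $\rho=1$ in (Np), the $\mathfrak{h}_3$ ($D=-1$), $\mathfrak{h}_4$, $\mathfrak{h}_5$ subcases of (Ni), the admissible (Nii), the $A\neq\sqrt{-1}$ part of (Si), and (Siv1), (Siv3) --- I would certify indefiniteness by evaluating $H$ on coframe directions. Taking $\xi,\nu$ dual to $\varphi^i,\varphi^k$ returns the diagonal entry $\Omega^B_{i\overline{i}k\overline{k}}$, so it suffices to locate one index pair giving a strictly positive value and another giving a strictly negative value (or, if all diagonal entries share a sign, to combine a positive diagonal term with an off-diagonal term in a mixed test vector to flip it). This shows $H$ takes both signs at a point, and by invariance everywhere, so the metric is neither non-negative nor non-positive.

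The main obstacle is not conceptual but one of scale and bookkeeping: the computation must be repeated for every Lie algebra and every admissible parameter in the complex structure, and the torsion-quadratic terms entering $\Omega^B$ are exactly where sign errors propagate. The genuinely delicate step is the non-negative side: there a finite check on basis directions is \emph{insufficient}, and one must control the full quartic form $H(\xi,\nu)$ in the two complex variables, which requires spotting the correct sum-of-squares structure rather than merely tabulating diagonal entries. These case-by-case verifications are carried out in the Appendix, with the resulting signs collected in Table \ref{table:Sum up}.
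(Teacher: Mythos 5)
Your overall strategy is the same as the paper's: restrict to the (Cplx) cases, compute the invariant components $\Omega^B_{i\overline{j}k\overline{l}}$ algebraically from the structure equations and the metric coefficients, exhibit the quartic form as a non-negative combination in the good cases, and certify indefiniteness by test vectors in the rest. The Appendix of the paper carries out exactly this program (with Sage), and your remark that a basis check is insufficient for the non-negative direction while it suffices for indefiniteness is the right dichotomy. Two refinements worth noting: where the diagonal entries $\Omega^B_{i\overline{i}k\overline{k}}$ do not already exhibit both signs, the paper's ``mixed test vector'' is implemented concretely either as the negativity of a $2\times 2$ Hermitian determinant such as $\Omega_{1\overline{1}3\overline{3}}\Omega_{2\overline{2}3\overline{3}}-\Omega_{1\overline{2}3\overline{3}}\Omega_{2\overline{1}3\overline{3}}<0$, or (for $\mathfrak{h}_4$, $\mathfrak{h}_5$ at $D=\tfrac14$) as the indefiniteness of the second Bismut--Ricci tensor, i.e.\ of the trace of the quartic form over one slot --- the same device used for the Hopf metric $g_H$ earlier in the paper.

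There is, however, one concrete error in your setup. You assert that for the non-negative cases in family (Ni) the (Cplx) condition forces $u=v=z=0$, hence a diagonal metric. It does not: for (Ni) one may normalize $v=z=0$ and $r^2=1$ up to linear biholomorphism, but $u$ remains a free parameter, and (Cplx) only imposes $\rho=0$. This is not a cosmetic point. For $\mathfrak{h}_2$ (i.e.\ $D=\sqrt{-1}$) the components $\Omega_{3\overline{3}1\overline{2}}$ and $\Omega_{3\overline{3}2\overline{1}}$ are proportional to $u$ and $\overline{u}$ with vanishing diagonal partners, so the determinant
\begin{equation*}
\Omega_{3\overline{3}1\overline{1}}\Omega_{3\overline{3}2\overline{2}}-\Omega_{3\overline{3}1\overline{2}}\Omega_{3\overline{3}2\overline{1}}=-\frac{t^{8}|u|^{2}}{(s^{2}-|u|^{2})^{2}}
\end{equation*}
is strictly negative unless $u=0$: non-negativity for $\mathfrak{h}_2$ holds \emph{only if} $u=0$ (this is the caveat recorded in Table \ref{table:Sum up}), and your argument as written would never detect this extra condition. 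Likewise, for $\mathfrak{h}_3$ with $D=1$ and for $\mathfrak{h}_8$ the non-negativity must be verified for arbitrary $u$ with $s^2>|u|^2$, not just for the diagonal metric; it does hold, because the surviving form factors as $t^2(|\xi^1|^2+\mathrm{Re}(D)\,(|\xi^1|^2|\nu^2|^2+|\xi^2|^2|\nu^1|^2)+\dots)$, a product of positive-semidefinite forms independent of $u$, but that is a statement you must prove rather than assume away. Only in (Siii1) (and in (Si), (Siv3)) does (Cplx) actually force $u=v=z=0$.
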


The computations in the Appendix lead to the Remarks \ref{rmk: 1}, \ref{rmk: 2}, \ref{rmk: 3}, \ref{rmk: 4} and \ref{rmk: 5} that we summarize in the following statement.
\begin{theorem}\label{th_HCF on solvmanifolds}
    Let $M$ be a six-dimensional solvmanifold endowed with invariant metric $g$ and complex structure $J$, $g$ as in (\ref{meetric on nilmanifolds}) and  $J$ such that the canonical bundle is holomorphically-trivial. Then the symmetries of (Cplx) are preserved by any HCF. Moreover, the HCFs preserve Bismut-Griffiths-non-negativity and Bismut-flatness, when they occur.
\end{theorem}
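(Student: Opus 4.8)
The plan is to exploit the homogeneity of these examples in order to reduce each Hermitian curvature flow to a finite-dimensional ODE, and then to argue that the relevant loci are invariant for that ODE. First I would observe that the HCF is a \emph{natural} flow: both $S$ and $Q$ are built from the Chern curvature and torsion, which are equivariant under biholomorphic isometries. Since the metric (\ref{meetric on nilmanifolds}) is left-invariant on the Lie group and the left translations act by isometries fixing $J$, the short-time solution provided by \cite{Streets_2011} must remain left-invariant by uniqueness. Hence, with $J$ held fixed, the flow descends to an ODE on the finite-dimensional space of invariant Hermitian structures parametrized by $(r^{2},s^{2},t^{2},u,v,z)$, whose right-hand side $-S+Q$ is an explicit vector field, polynomial in the structure constants of Tables 1--2 and rational in the metric entries.

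With this reduction in hand, the preservation of the symmetries of (Cplx) becomes the statement that the locus cut out in Table \ref{table:Sum up} is invariant for this ODE. By Theorem \ref{th_cplx solvmanifolds} that locus is exactly where $\Omega^{B}$ satisfies (Cplx), and in each case it factors into a constraint on $J$ (such as $\rho=0$ in (Ni), $c=B=0$ in (Nii), or $A=\sqrt{-1}$ in (Si)), which is automatic since the flow fixes $J$, together with the vanishing of some of the off-diagonal metric parameters (e.g.\ $u=v=z=0$ in (Si), (Siii1), (Siv3), or $v=0$ in (Nii)). I would verify the latter by computing the components of $-S+Q$ corresponding to those parameters and checking that they vanish identically on the locus; by uniqueness of ODE solutions this forces a flow line starting there to remain there. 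The cleanest way to organize the check is to recognize each such vanishing locus as the fixed-point set of a torus (or finite) symmetry of the invariant metrics commuting with $J$ and the bracket, so that equivariance of the HCF yields invariance of the fixed-point set, the explicit computation only confirming that no further degeneration occurs.

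For the Bismut-Griffiths-non-negativity I would then proceed case by case using Theorem \ref{prop_positivity solvmanifolds}. In the cases (Siii1) and (Ni) with $\mathfrak{h}_{3}$ ($D=1$) or $\mathfrak{h}_{8}$, non-negativity holds for \emph{every} invariant metric in the (Cplx) subfamily, so once the previous step shows the flow stays in that subfamily the conclusion is immediate. In the remaining non-negative case, (Ni) with $\mathfrak{h}_{2}$, where $\Omega^{B}(\xi,\overline{\xi},\nu,\overline{\nu})\geq 0$ requires the additional condition $u=0$, I would show that $u=0$ is itself an invariant constraint for the ODE (again by checking the $u$-component of $-S+Q$ vanishes there), so non-negativity is preserved. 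For Bismut-flatness: the torus case (Np) with $\rho=0$ is Chern-flat with vanishing torsion, whence $S=Q=0$ and the flow is stationary; for (Si) with $\mathfrak{g}_{2}^{0}$ and diagonal metric I would combine the invariance of the diagonal locus $u=v=z=0$ with the fact that all diagonal metrics there are Bismut-flat, so that flatness is carried along automatically.

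The main obstacle will be the explicit curvature bookkeeping rather than any conceptual difficulty. Computing $\Omega^{B}$, together with the Chern data $S$ and $Q$, in each Lie algebra of Tables 1--2, and then verifying both that the positivity and flatness loci are invariant and that the sign of $\Omega^{B}(\xi,\overline{\xi},\nu,\overline{\nu})$ is controlled along the nonlinear ODE, is unavoidable; this is precisely what the Appendix computations and the Remarks \ref{rmk: 1}, \ref{rmk: 2}, \ref{rmk: 3}, \ref{rmk: 4} and \ref{rmk: 5} carry out. The equivariance argument disposes of (Cplx) cleanly, but the non-negativity statement genuinely needs the curvature signs to be tracked through the flow in each relevant case.
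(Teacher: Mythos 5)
Your proposal is correct and follows essentially the same route as the paper: the theorem is proved there precisely by reducing to the finite-dimensional family of invariant metrics and checking, case by case in the Appendix (Remarks \ref{rmk: 1}--\ref{rmk: 5}), that the off-diagonal components of $-S+Q$ vanish on the loci $u=v=z=0$ (or $v=0$, or $u=0$) cutting out (Cplx), non-negativity and flatness, so that these loci are ODE-invariant and the static characterizations of Theorems \ref{th_cplx solvmanifolds} and \ref{prop_positivity solvmanifolds} do the rest. Your extra remark about organizing the computation via fixed-point sets of isometric symmetries is a nice packaging but is not needed beyond the explicit check the paper already performs.
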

We remark that recent results in \cite{fino2021pluriclosed} show that the \emph{pluriclosed flow} (which is in the HCF family) preserves the Bismut K{\"a}hler-like condition on $2$-step nilpotent Lie group with left-invariant Hermitian structure.

\medskip

All the computations on six-dimensional Calabi-Yau solvmanifolds within the proofs of the above statements are contained in the Appendix.

\section{HCFs on Linear Hopf Manifolds}\label{sec_Hopf}
Linear Hopf manifolds are defined as quotients of the complex domain $\mathbb{C}^{n}\setminus\{0\}$ over an equivalence relation depending on $a\in\mathbb{C}^{n}\setminus\{0\}$ 
\begin{equation*}
    M^{n}=\frac{\mathbb{C}^{n}\setminus \{0\}}{\sim}\;,
\end{equation*}
where $(z_{1},\cdots,z_{n})\sim(a_{1}z_{1},\cdots,a_{n}z_{n})$ with $|a_{1}|=\cdots=|a_{n}|\neq1$.\\
These manifolds come with a natural complex structure and a Hermitian metric 
\begin{equation*}
	g_{H}=\frac{\delta_{ij}}{|z|^{2}} \, dz^{i}\otimes d\overline{z}^{j}\;.
\end{equation*}
The Bismut curvature tensor associated to $g_{H}$ satisfies various symmetries, including (Cplx). Indeed, its non-vanishing coefficients are (see \cite{liu2012geometry}):
\begin{equation*}
	\Omega^{B}_{i\overline{j}k\overline{l}}(z)=\frac{\delta_{il}\delta_{jk}-\delta_{ij}\delta_{kl}}{|z|^{4}}+\frac{\delta_{ij}\overline{z}_{k}z_{l}+\delta_{kl}\overline{z}_{i}z_{j}-\delta_{il}z_{j}\overline{z}_{k}-\delta_{jk}\overline{z}_{i}z_{l}}{|z|^{6}}\;.
\end{equation*}
Thus, for any $\xi,\nu\in T^{1,0}M$
\begin{multline*}
        \Omega^{B}(\xi,\overline{\xi},\nu,\overline{\nu})_{|_{z}}=\frac{1}{|z|^{6}}\left(-|\xi|^{2}|\nu|^{2}|z|^{2}+|\xi\cdot\nu|^{2}|z|^{2}+|\nu\cdot z|^{2}|\xi|^{2}+|z\cdot\xi|^{2}|\nu|^{2}+\right.\\
	     \left.  -(\xi\cdot\nu)(\nu\cdot z)(z\cdot\xi)-(\nu\cdot\xi)(\xi\cdot z)(z\cdot\nu)\right)\;.
\end{multline*}
Since this vanishes for $n=2$, we get the following
\begin{prop*}
    For $n=2$ the Hopf manifold with canonical metric $g_{H}$ is Bismut flat. Thus, in particular, it is \textit{Bismut-Griffiths-non-negative}.
\end{prop*}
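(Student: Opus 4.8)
The goal is to show that $\Omega^{B}\equiv 0$ when $n=2$; Bismut-Griffiths-non-negativity is then immediate, since the zero tensor trivially lies in $\wedge^{1,1}M\otimes\wedge^{1,1}M$ (so (Cplx) holds) and satisfies $\Omega^{B}(\xi,\overline{\xi},\nu,\overline{\nu})=0\geq 0$ for every $\xi,\nu$. Thus the whole content is the flatness assertion, and the plan is to read it off the explicit coefficients $\Omega^{B}_{i\overline{j}k\overline{l}}(z)$ recalled above. First I would cut down the number of components to be checked: the Hermitian symmetry $\Omega^{B}_{i\overline{j}k\overline{l}}=\overline{\Omega^{B}_{j\overline{i}l\overline{k}}}$ together with the manifest invariance of the formula under the simultaneous pair-swap $(i,\overline{j})\leftrightarrow(k,\overline{l})$ reduces the $16$ components of a two-dimensional tensor to a handful of representatives. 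For each of these I would clear the denominator and set
\[
N_{i\overline{j}k\overline{l}}:=(\delta_{il}\delta_{jk}-\delta_{ij}\delta_{kl})\,|z|^{2}+\delta_{ij}\overline{z}_{k}z_{l}+\delta_{kl}\overline{z}_{i}z_{j}-\delta_{il}z_{j}\overline{z}_{k}-\delta_{jk}\overline{z}_{i}z_{l},
\]
so that $\Omega^{B}_{i\overline{j}k\overline{l}}=N_{i\overline{j}k\overline{l}}/|z|^{6}$, and the claim becomes $N\equiv 0$.

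The key point — and the reason the statement is special to $n=2$ — is the dimensional identity $|z|^{2}=|z_{1}|^{2}+|z_{2}|^{2}$. Whenever the leading Kronecker term $\delta_{il}\delta_{jk}-\delta_{ij}\delta_{kl}$ is nonzero it forces a pairing of the indices that turns the $|z|^{2}$ factor into exactly the sum of the surviving quadratic terms, producing a cancellation; for instance in the diagonal case $N_{1\overline{1}2\overline{2}}$ the term $-|z|^{2}$ is cancelled by $|z_{1}|^{2}+|z_{2}|^{2}$. In the off-diagonal cases the Kronecker term vanishes and the two $z$-quadratic contributions cancel against each other. A short case analysis over the representatives then gives $N\equiv 0$.

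As a conceptual check on the diagonal part I would observe that the bisectional curvature displayed above equals $-\det G/|z|^{6}$, where $G$ is the $3\times 3$ Gram matrix of the three vectors $\xi,\nu,z\in\mathbb{C}^{2}$ (expanding $\det G$ reproduces, up to sign, the bracketed expression in the displayed formula). Since any three vectors in $\mathbb{C}^{2}$ are linearly dependent, $\det G=0$, which re-proves the vanishing of $\Omega^{B}(\xi,\overline{\xi},\nu,\overline{\nu})$ for $n=2$ independently of the coordinate computation; this also clarifies why positivity becomes indefinite once $n\geq 3$, where three generic vectors span a rank-$3$ Gram matrix.

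I do not expect a genuine obstacle: once the coefficients are in hand the argument is a finite verification, and the only thing to get right is the bookkeeping of the Kronecker symbols and the observation that it is precisely the two-variable identity $\sum_{m}z_{m}\overline{z}_{m}=|z|^{2}$ that forces every component to cancel. Having established flatness, Bismut-Griffiths-non-negativity follows for free as noted, completing the proof.
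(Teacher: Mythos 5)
Your proof is correct, and in substance it follows the same path as the paper: both arguments read the flatness off the explicit coefficients $\Omega^{B}_{i\overline{j}k\overline{l}}$ of $g_{H}$. The paper simply evaluates the bisectional curvature $\Omega^{B}(\xi,\overline{\xi},\nu,\overline{\nu})$ on arbitrary $\xi,\nu$ and asserts that the resulting expression ``vanishes for $n=2$,'' leaving the reason implicit. Your two routes both fill this in: the component-by-component check of the numerators $N_{i\overline{j}k\overline{l}}$ (which I verified goes through exactly as you describe, with $|z|^{2}=|z_{1}|^{2}+|z_{2}|^{2}$ driving the cancellation in the diagonal cases) establishes flatness of the full tensor directly, without needing to polarize the quadratic form back to the tensor; and your Gram-matrix identity is genuinely the cleanest justification of the paper's one-line claim, since the bracketed expression in the paper's displayed formula is precisely $-\det G$ for $G=\bigl(\langle w_a,w_b\rangle\bigr)$ with $(w_1,w_2,w_3)=(\xi,\nu,z)$, and three vectors in $\mathbb{C}^{2}$ are always linearly dependent. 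The Gram-determinant viewpoint also correctly explains why the same formula fails to be signed for $n\geq 3$, which is consistent with the paper's subsequent proposition. No gaps.
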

However, this is not the case in higher dimension. 
\begin{prop*}
    The Hopf manifold with canonical metric $g_{H}$ is not Bismut-Griffiths non-negative for $n>2$.
\end{prop*}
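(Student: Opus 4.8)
The plan is to produce an explicit counterexample. Since Bismut-Griffiths-non-negativity requires $\Omega^{B}(\xi,\overline{\xi},\nu,\overline{\nu})\geq 0$ for \emph{every} point $z$ and every pair of non-zero $(1,0)$-vectors $\xi,\nu$, it suffices to exhibit a single triple $(z,\xi,\nu)$ at which the displayed expression is strictly negative. I would work directly from the formula for $\Omega^{B}(\xi,\overline{\xi},\nu,\overline{\nu})_{|_{z}}$ computed above, reading $\xi\cdot\nu$, $\nu\cdot z$ and $z\cdot\xi$ as the standard Hermitian inner products on $\mathbb{C}^{n}$.

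The key observation is that every term capable of compensating the leading negative term $-|\xi|^{2}|\nu|^{2}|z|^{2}$ — namely $|\xi\cdot\nu|^{2}|z|^{2}$, $|\nu\cdot z|^{2}|\xi|^{2}$, $|z\cdot\xi|^{2}|\nu|^{2}$ and the two cubic cross-terms — is built entirely out of the three inner products $\xi\cdot\nu$, $\nu\cdot z$ and $z\cdot\xi$. Hence I would simply choose $z$, $\xi$ and $\nu$ mutually orthogonal, which forces all three inner products to vanish and collapses the bracket to its first summand alone. Concretely, because $n>2$ the standard basis $e_{1},\dots,e_{n}$ of $\mathbb{C}^{n}$ has at least three members, so I take $z=e_{1}$, $\xi=e_{2}$, $\nu=e_{3}$. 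Then $\xi\cdot\nu=\nu\cdot z=z\cdot\xi=0$ (and likewise their conjugates), so the formula reduces to
\begin{equation*}
\Omega^{B}(\xi,\overline{\xi},\nu,\overline{\nu})_{|_{z}}=\frac{-|\xi|^{2}|\nu|^{2}|z|^{2}}{|z|^{6}}=-1<0,
\end{equation*}
which already shows that $g_{H}$ is not Bismut-Griffiths-non-negative.

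There is essentially no serious obstacle, since the curvature has already been computed; the only point requiring care is that three mutually orthogonal non-zero vectors exist precisely when $n\geq 3$, and this is exactly where the hypothesis $n>2$ enters — in dimension two the orthogonal complement of $z$ is one complex dimensional, so $\xi$ and $\nu$ cannot be chosen orthogonal both to $z$ and to each other, consistent with the Bismut-flatness of the previous proposition. If a sharper statement were desired I would, after a $U(n)$-rotation placing $z=|z|e_{1}$, split $\xi$ and $\nu$ into their components $\xi_{1}e_{1}+p$ and $\nu_{1}e_{1}+q$ along and orthogonal to $z$; a short cancellation then removes every $|\xi_{1}|^{2}$-, $|\nu_{1}|^{2}$- and cross-term and rewrites the bracket as $|z|^{2}\bigl(|p\cdot q|^{2}-|p|^{2}|q|^{2}\bigr)$. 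By Cauchy--Schwarz this is $\leq 0$, so $g_{H}$ is in fact Bismut-Griffiths-\emph{non-positive}, and strictly negative wherever $p,q$ are linearly independent, which again requires $n>2$. For the present statement, however, the single orthonormal triple suffices.
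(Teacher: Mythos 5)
Your proof is correct, but it takes a genuinely different route from the paper. The paper argues indirectly: it first computes the Bismut--Ricci curvature $R^{B}_{i\overline{j}}=\frac{2-n}{|z|^{4}}(\delta_{ij}|z|^{2}-\overline{z}_{i}z_{j})$, observes that $R^{B}(\xi,\overline{\xi})<0$ whenever $\xi$ is not proportional to $z$ and $n>2$, and then uses the fact that the Ricci curvature is a trace of bisectional curvatures to conclude that \emph{some} $\Omega^{B}_z(\xi,\overline{\xi},\partial_i,\overline{\partial}_i)$ must be strictly negative, without identifying which. You instead exhibit an explicit triple: taking $z,\xi,\nu$ mutually orthogonal (possible exactly when $n\geq 3$) kills every term in the bracket except $-|\xi|^{2}|\nu|^{2}|z|^{2}$, giving the value $-1$ at $(e_1,e_2,e_3)$; this checks out directly against the coefficient formula, since $\Omega^{B}_{2\overline{2}3\overline{3}}(e_1)=-\delta_{22}\delta_{33}=-1$. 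Your argument is more elementary and self-contained, and your closing observation is a genuine strengthening that the paper does not state here: after rotating $z$ to $|z|e_1$ and writing $\xi=\xi_1 e_1+p$, $\nu=\nu_1 e_1+q$, the bracket collapses to $|z|^{2}\left(|p\cdot q|^{2}-|p|^{2}|q|^{2}\right)\leq 0$ by Cauchy--Schwarz, so $g_H$ is in fact Bismut-Griffiths-non-positive (this is consistent with, and sharper than, the paper's later remark that the tensor $U_{\alpha}$ is ``not non-negative''). What the paper's route buys in exchange is the explicit Ricci computation and the observation that the two Bismut--Ricci tensors coincide for this metric, which is of independent interest. Either proof is acceptable.
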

\begin{proof}
    The Bismut curvature tensor satisfies $\Omega^{B}_{i\overline{j}k\overline{l}}=-\Omega^{B}_{k\overline{j}i\overline{l}}=-\Omega^{B}_{i\overline{l}k\overline{j}}$. Thus $\Omega^{B}_{i\overline{j}k\overline{l}}=\Omega^{B}_{k\overline{l}i\overline{j}}$ and the two Bismut-Ricci curvatures agree and are
    \begin{equation*}
	    R^{B}_{i\overline{j}}=\frac{1}{|z|^{4}}\left((2-n)(\delta_{ij}|z|^{2}-\overline{z}_{i}z_{j})\right)\;,
    \end{equation*}
    so that for any $\xi\in T^{1,0}M$
    \begin{equation*}
	    R^{B}(\xi,\overline{\xi})=\frac{2-n}{|z|^{4}}\left(|\xi|^{2}|z|^{2}-|\xi\cdot z|^{2}\right)\leq 0\;,
    \end{equation*}
    and the equality holds only if $\xi=\lambda z$ with $\lambda\in\mathbb{C}$.\\
    Thus, if we take $\xi$ different from any multiples of $z$
    \begin{align*}
	    0 > R_z^{B}\left(\xi,\overline{\xi}\right)&=g^{i\overline{j}}\Omega_z^{B}\left(\xi,\overline{\xi},\partial_{i},\overline{\partial}_{j}\right)=|z|^{2}\sum_{i}\Omega_z^{B}\left(\xi,\overline{\xi},\partial_{i},\overline{\partial}_{i}\right)\;,
    \end{align*}
    so at least one of the $\Omega_z^{B}\left(\xi,\overline{\xi},\partial_{i},\overline{\partial}_{i}\right)$ is strictly negative.
\end{proof}

\subsection{HCF-closed family of metrics on linear Hopf manifolds}\label{sec_canonical metrics on LHM}
Here we study the Hermitian metrics $g(\alpha,\beta)$ (depending on real parameters $\alpha>0$ and $\beta>-\alpha$) on a generic linear Hopf manifold defined by
\begin{equation*}
	g(\alpha,\beta)_{i\overline{j}}=\alpha\frac{\delta_{ij}}{|z|^{2}}+\beta \frac{\overline{z}_{i}z_{j}}{|z|^{4}}\;.
\end{equation*}
\begin{prop}\label{prop: homogeneous}
    Given an $n$-dimensional linear Hopf manifold $M$, the $g(\alpha,\beta)$ metrics are all the $S^1 \times U(n)$-invariant metrics on $M$; hence, in particular, they are homogeneous. Moreover, if $n\geq 3$ they are all the Hermitian homogeneous metrics on $M$.
\end{prop}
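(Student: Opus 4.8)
The plan is to use homogeneity to reduce the whole statement to a piece of representation theory on a single tangent space. Once a group $G$ acts transitively on a manifold, a $G$-invariant metric is completely determined by its value at one point $p$, and the only constraint on that value is invariance under the isotropy representation of the stabiliser $H=G_p$. By Schur's lemma the space of $H$-invariant Hermitian inner products is then read off from the decomposition of $T^{1,0}_pM$ into irreducible $H$-modules: one positive scalar for each isotypic trivial/irreducible summand and no off-diagonal term between inequivalent summands. So the entire proposition becomes a matter of identifying the correct transitive group, computing its stabiliser, and decomposing the isotropy representation.

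For the first assertion I would work on the universal cover $\mathbb{C}^n\setminus\{0\}$, where $\mathbb{R}_{>0}\times U(n)$ acts by $(\lambda,U)\cdot z=\lambda Uz$. Since $|a_1|=\dots=|a_n|$, the deck generator $z\mapsto(a_1z_1,\dots,a_nz_n)$ equals $|a|\cdot\mathrm{diag}(a_i/|a|)$ and hence lies in $\mathbb{R}_{>0}\cdot U(n)$; this makes any $\mathbb{R}_{>0}\times U(n)$-invariant Hermitian tensor automatically descend to $M$, and identifies the $S^1\times U(n)$-invariant metrics on $M$ with the $\mathbb{R}_{>0}\times U(n)$-invariant ones upstairs. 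At $p=e_1$ the equation $\lambda Ue_1=e_1$ forces $\lambda=1$ and $Ue_1=e_1$, so the stabiliser is $U(n-1)$ acting on $e_1^{\perp}$. Under this group $T^{1,0}_pM\cong\mathbb{C}^n=\mathbb{C}e_1\oplus e_1^{\perp}$ splits as the trivial module plus the standard module, two inequivalent complex irreducibles for every $n\ge 2$; Schur therefore yields a real two-parameter family. Evaluating $g(\alpha,\beta)_{i\overline{j}}=\alpha\,\delta_{ij}/|z|^2+\beta\,\overline{z}_iz_j/|z|^4$ at $p$ gives precisely the scalar $\alpha+\beta$ on $\mathbb{C}e_1$ and $\alpha$ on $e_1^{\perp}$, matching the two Schur parameters; a direct check that each $g(\alpha,\beta)$ is $\mathbb{R}_{>0}\times U(n)$-invariant closes the claim, and transitivity of the descended action gives homogeneity.

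For the second assertion let $g$ be a Hermitian homogeneous metric, meaning homogeneous under its group of holomorphic isometries, and let $G$ be the identity component of that group; since $M$ is compact, $G$ is a compact connected Lie group acting transitively. The plan is to pass to the elliptic fibration $\pi\colon M\to\mathbb{CP}^{n-1}$ obtained by projectivisation: it is holomorphic and $G$-equivariant, so $G$ acts transitively and holomorphically on the base, and the tangent line to the fibres gives a distinguished $G$-invariant complex line $L\subset T^{1,0}_pM$ complementary to the horizontal space $d\pi^{-1}\bigl(T_{\pi(p)}\mathbb{CP}^{n-1}\bigr)\cong\mathbb{C}^{n-1}$. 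The goal is then to show that the isotropy representation still splits as $L$ (trivial) plus a horizontal module that is irreducible and inequivalent to $L$, so that Schur once more leaves only two invariant scalars, i.e. $g=g(\alpha,\beta)$.

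The main obstacle is exactly this last reduction: it forces one to control which compact groups can act transitively and holomorphically, and then to verify irreducibility of the isotropy on the horizontal space. This is precisely where $n\ge 3$ enters. For $n=2$ the total space $M\approx S^1\times S^3$ admits the smaller transitive group $S^1\times SU(2)$ with $SU(2)=S^3$, whose isotropy is trivial, so the whole family of left-invariant Berger metrics on $S^3$ becomes homogeneous and the space of invariant Hermitian metrics jumps well beyond two parameters; the hypothesis $n\ge 3$ is what excludes this exceptional low-dimensional symmetry. I expect the genuinely delicate point to be ruling out other transitive subgroups for which the naive invariant-tensor count is larger — for even $n$ the symplectic group $Sp(n/2)\subset U(n)$ acts transitively on $S^{2n-1}$ and has a strictly smaller stabiliser, so one must use the interplay between the complex structure and the deck group $\langle A\rangle$ (which pins down the $U(1)$-weights of $L$ and of the horizontal module) to show that no such group produces a Hermitian homogeneous metric outside the family $g(\alpha,\beta)$. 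Organising this case analysis, rather than the linear algebra of any single case, is where I anticipate the real work.
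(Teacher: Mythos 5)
Your treatment of the first assertion is, in substance, the paper's own proof: one checks directly that the $g(\alpha,\beta)$ are invariant, then fixes $p=e_1$, identifies the isotropy group of $S^1\times U(n)$ at $p$ with $U(n-1)$ acting on $e_1^{\perp}$, and concludes that isotropy-invariance forces $g(e_1)$ to be block-diagonal of the form $\mathrm{diag}(a,\lambda\,\mathrm{Id})$, i.e.\ $g=g(\lambda,a-\lambda)$ at $e_1$ and hence everywhere by transitivity. The paper phrases this as an explicit matrix identity rather than through Schur's lemma, but the content is identical, and your remark that the deck generator lies in $\mathbb{R}_{>0}\cdot U(n)$ (so that invariant tensors on the cover descend) is a clean way to justify the setup.

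The second assertion is where your proposal stops short of a proof. You correctly reduce it to controlling which compact connected groups can act transitively and holomorphically on $M$, and you correctly single out the problematic cases ($SU(2)=S^3$ for $n=2$, and $Sp(n/2)\subset U(n)$ for even $n$); but you then only announce the ensuing case analysis without carrying it out, so as written the claim ``every Hermitian homogeneous metric is $S^1\times U(n)$-invariant for $n\geq 3$'' is not established. For comparison, the paper disposes of this in one sentence, by invoking the fact that for $n>2$ every $SU(n)$-invariant metric on $S^{2n-1}$ is already $U(n)$-invariant — equivalently, the isotropy representation of $SU(n-1)$ on $e_1^{\perp}$ is still irreducible and inequivalent to the trivial summand, so shrinking $U(n)$ to $SU(n)$ creates no new invariant tensors. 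Note, however, that this fact only settles the unitary alternatives and does not obviously address the symplectic one you raise: $\mathbb{R}_{>0}\times Sp(n/2)$ acts transitively and holomorphically on $\mathbb{C}^{n}\setminus\{0\}$ with isotropy $Sp(n/2-1)$, whose space of invariant Hermitian inner products on $\mathbb{C}^{n}$ is strictly larger than two-dimensional (the quaternionic line through $e_1$ is a two-dimensional trivial module). So your instinct that the symplectic case is the genuinely delicate point is sound; but identifying a difficulty is not the same as resolving it, and resolving it — either by ruling these actions out or by showing the resulting metrics still lie in the family — is precisely the missing step in your argument.
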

\begin{proof}
    On a generic Hermitian metric on $M$
    $$ g=g_{i\overline{j}}(z)\, dz^i \otimes d\overline{z}^j \;,$$ the $U(n)$-invariant condition is
    $$ \overline{U} (g_{i\overline{j}}(p))U^t = (g_{i\overline{j}}(Up)) \;, $$ for any $U\in U (n) $. Notice that it is satisfied by the metrics $g_z(\alpha,\beta)$.
    Moreover, these metrics also are $S^1$ invariant, hence homogeneous on $M$.\\
    Now suppose that $g$ is an Hermitian $S^1 \times U(n)$-invariant metric on $M$. If we fix a point $p \in M$ and consider its isotropy group $(S^1 \times U(n))_p \cong U(n-1)$, we get that  
     $$ \overline{U} (g_{i\overline{j}}(p))U^t = (g_{i\overline{j}}(p)) \;, $$ for any $U\in (S^1 \times U(n))_p $.
    Moreover, we can take $e_1$ as point $p$; hence we get that the matrix
    $$\begin{pmatrix}
        1 &\\
          & U
    \end{pmatrix}
    \begin{pmatrix}
        g_{i\overline{j}}(e_1)
    \end{pmatrix}
    \begin{pmatrix}
        1 &   \\
          & U^\dagger
    \end{pmatrix}
    $$
    must be independent on $U\in U(n-1)$. \\
    The above equation forces $g_{i\overline{j}}(e_1)$ to be of the form 
    $$ g_{i\overline{j}}(e_1)=
    \begin{pmatrix}
        a &  \\
         & \lambda Id
    \end{pmatrix} \;,$$ where $a$ and $\lambda$ are positive real numbers. This means that $g$ agrees with $g(\lambda,a-\lambda)$ in $e_1$; moreover the $S^1\times U(n)-$invariance ensures that they agree all over $M$.\\
    The last statement comes from the fact that if $n>2$ on the sphere $S^{2n-1}$ any $SU(n)$-invariant metric actually is $U(n)$-invariant.
\end{proof}
Since the HCFs preserve the $S^1 \times U(n)$-invariance of the metrics, the $g(\alpha,\beta)$ family must be closed by the action of the HCFs. Moreover, the standard metric $g_{H}$ is $g(1,0)$, hence this family naturally arises studying the evolution of HCFs on linear Hopf manifolds. For the same reason, these metrics also arise in the evolution of $g_{H}$ by the Chern-Ricci flow (see \cite{tos-wei}). We also recall that the $g(\alpha,\beta)$ metrics also appear in \cite{liu-yang} where the authors use them to produce examples of Levi-Civita Ricci-flat Hermitian metrics on Hopf manifolds. \\
The inverse of $g(\alpha,\beta)$ is 
\begin{equation*}
	g(\alpha,\beta)^{i\overline{j}}=\frac{|z|^{2}}{\alpha}\left(\delta^{ij}-\frac{\beta}{\alpha+\beta}\frac{\overline{z}^{j}z^{i}}{|z|^{2}}\right)\;.
\end{equation*}
The Christoffel symbols for the Bismut connection are
\begin{align*}
	\Gamma_{ij}^{k}&=g^{k\overline{s}}\partial_{j}g_{i\overline{s}}=\frac{1}{|z|^{2}}\left(\frac{\beta}{\alpha}\delta_{j}^{k}\overline{z}_{i}-\delta_{i}^{k}\overline{z}_{j}\right)-\frac{\beta}{\alpha}\frac{\overline{z}_{i}\overline{z}_{j}z^{k}}{|z|^{4}} \;;\\
    \Gamma_{\overline{i}j}^{k} &=g^{k\overline{s}}\left(\overline{\partial}_{i}g_{j\overline{s}}-\overline{\partial}_{s}g_{j\overline{i}}\right)=\frac{1}{|z|^{2}}\left(\delta_{ij}z^{k}-\frac{\alpha+\beta}{\alpha}\delta_{j}^{k}z_{i}\right)+\frac{\beta}{\alpha}\frac{\overline{z}_{i}\overline{z}_{j}z^{k}}{|z|^{4}}\;,
\end{align*}
and a direct computation leads to $\Omega_{ijk\overline{l}}^{B}=0$ for any $i,j,k,l\in\{1,\ldots,n\}$. Thus we have the following,
\begin{prop}\label{prop: Cplx satisfied on Hopf}
	The Bismut curvature tensor associated to a $g(\alpha,\beta)$ metric on a linear Hopf manifolds satisfies (Cplx).
\end{prop}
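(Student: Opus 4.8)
The plan is to verify the reformulation of (Cplx) recorded just above the statement, namely $\Omega^B_{ijk\overline{l}}=0$ for all indices: these are exactly the coefficients of the $(2,0)$-part of the Bismut curvature two-form, the pair $k\overline{l}$ arising from the endomorphism slot after lowering with $g$. The structural remark I would make first is that, for any linear connection, the $(2,0)$-part of the curvature depends only on the $(1,0)$-part of the connection form: writing $\Omega=d\theta+\theta\wedge\theta$, its $(2,0)$-piece equals $\partial\theta^{(1,0)}+\theta^{(1,0)}\wedge\theta^{(1,0)}$, so the mixed symbols $\Gamma^k_{\overline{i}j}$ never enter. Hence the whole verification collapses onto the holomorphic Christoffel symbols $\Gamma^k_{ij}=g^{k\overline{s}}\partial_j g_{i\overline{s}}$ displayed above, and it suffices to show that
\[
\Omega^{e}{}_{k;ij}=\partial_i\Gamma^e_{jk}-\partial_j\Gamma^e_{ik}+\Gamma^d_{jk}\Gamma^e_{id}-\Gamma^d_{ik}\Gamma^e_{jd}
\]
vanishes, after which $\Omega^B_{ijk\overline{l}}=g_{e\overline{l}}\,\Omega^{e}{}_{k;ij}=0$ is immediate.

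To evaluate the two first-derivative terms I would use only the elementary rules $\partial_a|z|^2=\overline{z}_a$, $\partial_a z^e=\delta^e_a$ and $\partial_a\overline{z}_\bullet=0$. Expanding $\partial_i\Gamma^e_{jk}-\partial_j\Gamma^e_{ik}$ and grouping the outcome by the monomials $\delta^e_i\overline{z}_j\overline{z}_k$, $\delta^e_j\overline{z}_i\overline{z}_k$, $\delta^e_k\overline{z}_i\overline{z}_j$ and $\overline{z}_i\overline{z}_j\overline{z}_k z^e$, most contributions cancel under antisymmetrisation in $i,j$, leaving a single surviving term proportional to $(1+\tfrac{\beta}{\alpha})\,\overline{z}_k(\delta^e_j\overline{z}_i-\delta^e_i\overline{z}_j)/|z|^4$.

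For the quadratic part I would expand each of $\Gamma^d_{jk}\Gamma^e_{id}$ and $\Gamma^d_{ik}\Gamma^e_{jd}$ into the nine products of the three summands of each symbol, performing the $d$-contractions with $\delta^d_k\overline{z}_d=\overline{z}_k$, $\delta^d_j\delta^e_d=\delta^e_j$ and, crucially, $z^d\overline{z}_d=|z|^2$; this last identity is what prevents any factor of the dimension $n$ from appearing, since no trace $\delta^d_d$ ever occurs. After the same grouping the $|z|^{-6}$ monomials drop out under antisymmetrisation, and the remaining terms combine to $(1+\tfrac{\beta}{\alpha})\,\overline{z}_k(\delta^e_i\overline{z}_j-\delta^e_j\overline{z}_i)/|z|^4$, which is precisely minus the linear part. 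Summing the two contributions yields $\Omega^{e}{}_{k;ij}=0$, hence (Cplx).

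I expect the only genuine difficulty to be the bookkeeping of this quadratic term: there is nothing conceptually subtle, but one must keep the nine products and their contractions free of sign errors, and must fix a single convention for which lower index of $\Gamma^k_{ij}$ is the differentiated one (the formula $g^{k\overline{s}}\partial_j g_{i\overline{s}}$ differentiates along the \emph{second} slot), so that the linear and quadratic pieces are assembled consistently. Two cheap sanity checks guard against slips: both surviving expressions must be antisymmetric under $i\leftrightarrow j$, as befits coefficients of a two-form, and must be homogeneous of the correct degree in $z$; any violation of either flags an error.
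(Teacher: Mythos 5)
Your proposal is correct and follows essentially the same route as the paper: the paper likewise records the Bismut Christoffel symbols of $g(\alpha,\beta)$ and then asserts that a direct computation gives $\Omega^B_{ijk\overline{l}}=0$, which is exactly the calculation you carry out (and your intermediate cancellations — the linear part and the quadratic part each reducing to $\pm(1+\tfrac{\beta}{\alpha})\,\overline{z}_k(\delta^e_j\overline{z}_i-\delta^e_i\overline{z}_j)/|z|^4$ — check out). The only added value over the paper's one-line verification is your organizational remark that the $(2,0)$-part of the curvature involves only the $(1,0)$-part of the connection form, which is a correct and useful way to streamline the bookkeeping.
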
   
\begin{corol}\label{cor: HCF preseves Cplx on Hopf}
    The symmetries of (Cplx) are preserved by any HCF acting on a linear Hopf manifold equipped with a metric $g(\alpha,\beta)$.
\end{corol}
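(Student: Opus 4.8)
The plan is to avoid computing the evolution of the components $\Omega^{B}_{ijk\overline{l}}$ directly, and instead to exploit the symmetry characterization of the $g(\alpha,\beta)$ family together with the naturality of the flow. The key observation is that any HCF is a geometrically natural flow: the tensors $S$ and $Q$ are built functorially from the pair $(g,J)$ through the Chern connection and its torsion, so they commute with pullback by any biholomorphic isometry. Consequently, the whole argument reduces to showing that the flow stays inside the family $g(\alpha,\beta)$, at which point the previous proposition finishes the job.

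First I would record that the group $G=S^1\times U(n)$ acts on the linear Hopf manifold $M$ by biholomorphic isometries of $g(\alpha,\beta)$. Given $\Phi\in G$ and a solution $g(t)$ of the HCF with initial datum $g(0)=g(\alpha,\beta)$, naturality gives $\Phi^{*}(-S+Q)[g(t)]=(-S+Q)[\Phi^{*}g(t)]$, so $\Phi^{*}g(t)$ is again a solution of the HCF. Since $\Phi$ is an isometry of the initial metric, $\Phi^{*}g(0)=g(0)$, and by the short-time existence and uniqueness of HCF solutions on compact Hermitian manifolds (Streets--Tian \cite{Streets_2011}) I conclude $\Phi^{*}g(t)=g(t)$ for all $t$ in the interval of existence. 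Hence $g(t)$ remains $S^1\times U(n)$-invariant along the whole flow.

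Next I would invoke Proposition \ref{prop: homogeneous}: since the $g(\alpha,\beta)$ metrics are exactly the $S^1\times U(n)$-invariant Hermitian metrics on $M$, the invariance just established forces $g(t)=g(\alpha(t),\beta(t))$ for some curve $(\alpha(t),\beta(t))$ with $\alpha(t)>0$ and $\beta(t)>-\alpha(t)$. Finally, Proposition \ref{prop: Cplx satisfied on Hopf} says that every metric in this family satisfies (Cplx), i.e.\ $\Omega^{B}_{ijk\overline{l}}\equiv0$; therefore (Cplx) holds at each time, which is exactly the assertion that the symmetries of (Cplx) are preserved by the flow.

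I expect the main obstacle to be the justification of the naturality-plus-uniqueness step. One must verify that $S$ and $Q$ are genuinely natural operators, so that they intertwine with pullback by holomorphic isometries, and that the uniqueness theorem of Streets--Tian applies in the present setting; both points are standard but essential, since without them the symmetry could in principle fail to propagate in time. The remaining steps are immediate consequences of the two propositions already established, so the corollary follows at once once the invariance is shown to persist.
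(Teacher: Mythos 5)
Your proposal is correct and follows essentially the same route as the paper: the paper's (implicit) argument is precisely that the HCFs preserve $S^1\times U(n)$-invariance, so by Proposition \ref{prop: homogeneous} the flow stays in the $g(\alpha,\beta)$ family, and then Proposition \ref{prop: Cplx satisfied on Hopf} gives (Cplx) at every time. You merely make explicit the naturality-plus-uniqueness justification that the paper leaves implicit (and which it later corroborates computationally via Proposition \ref{pr_general ODE}).
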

Now we turn to the study of the holomorphic bisectional Bismut curvature of these metrics.\\
We have the following formula for the Bismut curvature tensor of $g(\alpha,\beta)$:
\begin{align*}
	\Omega^{B}_{i\overline{j}k\overline{l}}=&\alpha\underbrace{\left[\frac{\delta_{il}\delta_{jk}-\delta_{ij}\delta_{kl}}{|z|^{4}}+\frac{\delta_{ij}\overline{z}_{k}z_{l}+\delta_{kl}\overline{z}_{i}z_{j}-\delta_{il}z_{j}\overline{z}_{k}-\delta_{jk}\overline{z}_{i}z_{l}}{|z|^{6}}\right]}_{U_\alpha}\\
    &+2\beta\underbrace{\left[\frac{-\delta_{ij}\delta_{kl}}{|z|^{4}}+\frac{\delta_{ij}\overline{z}_{k}z_{l}+\delta_{kl}\overline{z}_{i}z_{j}}{|z|^{6}}+\frac{-\overline{z}_{i}z_{j}\overline{z}_{k}z_{l}}{|z|^{8}}\right]}_{U_\beta} \; .
\end{align*}
Notice that we already know $U_{\alpha}$ since it is the curvature tensor of $g_{H}$ (w.r.t. the Bismut connection). If we evaluate $U_{\beta}$ on vectors $\xi,\nu\in T^{1,0}M$, we get
\begin{align}\label{eq_U_beta}
	U_{\beta}\left(\xi,\overline{\xi},\nu,\overline{\nu}\right)&=\frac{1}{|z|^{8}}\left(|\xi|^{2}|z|^{2}-|\xi\cdot z|^{2}\right)\left(|\nu\cdot z|^{2}-|\nu|^{2}|z|^{2}\right) \leq 0\;,
\end{align}
and the equality holds if and only if $\xi=\lambda z$ or $\nu=\lambda z$ with $\lambda\in\mathbb{C}$.\\ 
In dimension two $U_{\alpha}$ vanishes, thus $g(\alpha,\beta)$ is Bismut-Griffiths-non-negative if and only if $\beta\leq 0$. In dimension greater than $2$, we have seen that $U_{\alpha}$ is not non-negative, as well as $U_{\beta}$; hence the non-negativity of the tensor will only depend on the ratio $\gamma\doteq\frac{\beta}{\alpha}$. For example we get 
\begin{lemma}\label{lemma_gamma1/2}
    For any dimension $n$, the Hermitian metric $g(\alpha,-\frac{1}{2}\alpha)$ on the n-dimensional linear Hopf manifold is Bismut-Griffiths-non-negative; in particular, given any $\xi,\nu\in T^{1,0}M$, we have the following equation
    \begin{equation*}
        \Omega^{B}(\xi,\overline{\xi},\nu,\overline{\nu})=\frac{\alpha}{|z|^{8}}\left|(\xi\cdot\nu)|z|^{2}-(\xi\cdot z)(z\cdot\nu)\right|^{2}\geq 0 \;.
    \end{equation*}
\end{lemma}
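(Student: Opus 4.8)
The plan is to specialize the decomposition $\Omega^{B}=\alpha\,U_{\alpha}+2\beta\,U_{\beta}$ recorded just above to the value $\beta=-\tfrac{1}{2}\alpha$. With this choice one has $2\beta=-\alpha$, so $\Omega^{B}=\alpha\,(U_{\alpha}-U_{\beta})$, and the entire statement reduces to evaluating the difference $U_{\alpha}-U_{\beta}$ on a pair $\xi,\nu\in T^{1,0}M$ and recognizing it as a nonnegative quantity. The two ingredients I need are already available: $U_{\alpha}$ is precisely the Bismut curvature of the standard metric $g_{H}$, so its evaluation $U_{\alpha}(\xi,\overline{\xi},\nu,\overline{\nu})$ is the expression computed at the beginning of \S\ref{sec_Hopf}, while the evaluation of $U_{\beta}$ is furnished by \eqref{eq_U_beta}.

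To keep the bookkeeping clean I would first fix shorthand for the scalar invariants: write $a=|\xi|^{2}$, $b=|\nu|^{2}$, $Z=|z|^{2}$, $W=\xi\cdot\nu$, $P=\xi\cdot z$ and $Q=\nu\cdot z$. Because the pairing is Hermitian, the conjugated products appearing in the formula for $U_{\alpha}$ satisfy $\nu\cdot\xi=\overline{W}$, $z\cdot\xi=\overline{P}$ and $z\cdot\nu=\overline{Q}$; in particular the two cross terms $(\xi\cdot\nu)(\nu\cdot z)(z\cdot\xi)$ and $(\nu\cdot\xi)(\xi\cdot z)(z\cdot\nu)$ are complex conjugates of one another and together contribute $2\,\mathrm{Re}(WQ\overline{P})$ to $U_{\alpha}$. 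This conjugation bookkeeping is the one point where the structure of the pairing must genuinely be used.

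Next I would place $U_{\alpha}$, which carries the factor $|z|^{-6}$, and $U_{\beta}$, which carries $|z|^{-8}$, over the common denominator $|z|^{8}$ and expand $U_{\alpha}-U_{\beta}$. The decisive feature is a wholesale cancellation: the two $abZ^{2}$ contributions cancel, and the mixed terms $a\,|Q|^{2}Z$ and $b\,|P|^{2}Z$ each cancel against their counterparts coming from the expansion of $U_{\beta}$, leaving exactly
\begin{equation*}
    U_{\alpha}-U_{\beta}=\frac{1}{|z|^{8}}\Bigl(|W|^{2}Z^{2}-2Z\,\mathrm{Re}(WQ\overline{P})+|P|^{2}|Q|^{2}\Bigr)\;.
\end{equation*}
The final and only conceptual step is to read the bracket as a perfect modulus square: setting $X=WZ$ and $Y=\overline{Q}\,P$ one verifies $|X|^{2}=|W|^{2}Z^{2}$, $|Y|^{2}=|P|^{2}|Q|^{2}$ and $\mathrm{Re}(X\overline{Y})=Z\,\mathrm{Re}(WQ\overline{P})$, so the bracket equals $|X-Y|^{2}=|WZ-P\overline{Q}|^{2}$. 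Unwinding the shorthand gives $WZ-P\overline{Q}=(\xi\cdot\nu)|z|^{2}-(\xi\cdot z)(z\cdot\nu)$, and multiplying through by $\alpha>0$ produces the asserted identity, with nonnegativity then immediate. I expect the main (indeed only) obstacle to be the sign- and conjugate-tracking in the cancellation; fixing the shorthand and the Hermitian-symmetry relations beforehand is precisely what neutralizes it.
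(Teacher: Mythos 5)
Your proposal is correct: specializing $\Omega^{B}=\alpha U_{\alpha}+2\beta U_{\beta}$ to $\beta=-\tfrac{1}{2}\alpha$, combining the displayed evaluations of $U_{\alpha}$ (the $g_{H}$ curvature) and $U_{\beta}$ from \eqref{eq_U_beta} over the common denominator $|z|^{8}$, and recognizing the surviving terms as the perfect square $\left|(\xi\cdot\nu)|z|^{2}-(\xi\cdot z)(z\cdot\nu)\right|^{2}$ checks out, including the conjugation bookkeeping for the cross terms. The paper states Lemma \ref{lemma_gamma1/2} without writing out a proof, but the computation you give is exactly the one implied by the surrounding text, so this is essentially the same approach.
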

\begin{rmk*}
    Notice that in any point $z\in M$ we will never get $\Omega^{B}(z)>0$ for metrics $g(\alpha,\beta)$, since both the terms $U_{\alpha}$ and $U_{\beta}$ vanish if $\xi=\lambda z$ or $\nu=\lambda z$ for $\lambda\in\mathbb{C}$.
\end{rmk*}
\begin{prop}\label{Lemma_gamma-n}
    There are coefficients $\gamma_{n}$ depending on the dimension $n$ of the linear Hopf manifold such that for all $\alpha>0$, $g(\alpha,\gamma\alpha)$ has Bismut-Griffiths-non-negative curvature tensor if and only if $\gamma\leq\gamma_{n}$. This coefficients are $\gamma_{2}=0$ and $\gamma_{n}=-\frac{1}{2}$ for $n\geq 3$.
\end{prop}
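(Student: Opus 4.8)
The plan is to reduce everything to the single ratio $\gamma=\beta/\alpha$ and then to exploit the sign of $U_\beta$. Writing the curvature as $\Omega^{B}(\xi,\overline{\xi},\nu,\overline{\nu})=\alpha\,[\,U_\alpha(\xi,\overline{\xi},\nu,\overline{\nu})+2\gamma\,U_\beta(\xi,\overline{\xi},\nu,\overline{\nu})\,]$ and using $\alpha>0$, Bismut-Griffiths-non-negativity of $g(\alpha,\gamma\alpha)$ is equivalent to $F_\gamma(\xi,\nu):=U_\alpha(\xi,\overline{\xi},\nu,\overline{\nu})+2\gamma\,U_\beta(\xi,\overline{\xi},\nu,\overline{\nu})\geq 0$ for all $\xi,\nu\in T^{1,0}M$ and all base points $z$. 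By homogeneity (Proposition \ref{prop: homogeneous}) it suffices to check this at one point, say $z=e_1$, so $|z|^2=1$. First I would record the key monotonicity: for fixed $\xi,\nu,z$ the map $\gamma\mapsto F_\gamma(\xi,\nu)$ is affine with slope $2\,U_\beta(\xi,\overline{\xi},\nu,\overline{\nu})\leq 0$ by \eqref{eq_U_beta}, hence non-increasing. Therefore the set of admissible $\gamma$ is downward closed, which already guarantees the existence of a threshold $\gamma_n=\sup\{\gamma: g(\alpha,\gamma\alpha)\text{ is non-negative}\}$ and reduces the statement to computing this supremum and checking that it is attained.

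For the value itself I would separate the two cases. When $n=2$ the tensor $U_\alpha$ vanishes identically, so $F_\gamma=2\gamma\,U_\beta$; since $U_\beta\leq 0$ with strict inequality for suitable $\xi,\nu$ (any pair neither of which is proportional to $z$, e.g.\ $\xi=\nu=e_2$), non-negativity holds exactly when $\gamma\leq 0$, giving $\gamma_2=0$. For $n\geq 3$ the lower bound $\gamma_n\geq-\tfrac12$ is immediate from Lemma \ref{lemma_gamma1/2}, which exhibits $g(\alpha,-\tfrac12\alpha)$ as non-negative; combined with the monotonicity above this yields non-negativity for every $\gamma\leq-\tfrac12$, establishing the ``if'' direction together with attainment at the endpoint.

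The crux is the matching upper bound $\gamma_n\leq-\tfrac12$ for $n\geq 3$, i.e.\ producing test vectors that force negativity as soon as $\gamma>-\tfrac12$. I would choose $\xi,\nu$ both orthogonal to $z$ and orthogonal to one another; this is possible precisely because for $n\geq 3$ the Hermitian orthogonal complement $z^{\perp}$ has dimension $n-1\geq 2$, so one can take e.g.\ $\xi=e_2$, $\nu=e_3$. With $\xi\cdot z=\nu\cdot z=\xi\cdot\nu=0$ the curvature formulas collapse to $U_\alpha(\xi,\overline{\xi},\nu,\overline{\nu})=-|\xi|^2|\nu|^2$ and $U_\beta(\xi,\overline{\xi},\nu,\overline{\nu})=-|\xi|^2|\nu|^2$, so that $F_\gamma(\xi,\nu)=-(1+2\gamma)|\xi|^2|\nu|^2$, which is strictly negative exactly when $\gamma>-\tfrac12$. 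This identifies $\gamma_n=-\tfrac12$ and makes transparent why the dimension threshold sits at $n=3$: the obstructing pair requires two independent directions inside $z^{\perp}$, which do not exist when $n=2$. The main obstacle is thus not any hard estimate but pinning down the optimal test configuration; once the vectors above are in hand, non-negativity at the endpoint comes for free from Lemma \ref{lemma_gamma1/2} (for $n\geq 3$) and from the direct substitution $\gamma=0$ (for $n=2$), completing the ``if and only if''.
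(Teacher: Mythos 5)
Your proof is correct and follows essentially the same route as the paper: the decomposition $\Omega^{B}=\alpha\,(U_\alpha+2\gamma U_\beta)$, Lemma \ref{lemma_gamma1/2} for the endpoint $\gamma=-\tfrac{1}{2}$, and a test pair $\xi,\nu$ orthogonal to $z$ and to each other (the paper's $\partial_{k},\partial_{l}$ at a point with $z_{k}=z_{l}=0$) to rule out $\gamma>-\tfrac{1}{2}$ when $n\geq 3$. Your explicit monotonicity observation (the slope $2U_\beta\leq 0$ makes the set of admissible $\gamma$ downward closed) is a small but welcome addition, since the paper leaves the case $\gamma<-\tfrac{1}{2}$ implicit.
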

\begin{proof}
    We already know that $\gamma_{2}=0$ has the above property.\\
    Now suppose that $n\geq 3$. Take $\alpha>0$ and $\varepsilon>0$; by Lemma \ref{lemma_gamma1/2} we know that the metric $g(\alpha,-\frac{1}{2}\alpha)$ is Bismut-Griffiths-non-negative and the metric $g(\alpha,(-\frac{1}{2}+\varepsilon)\alpha)$ has Bismut curvature tensor given by
    \begin{equation*}
        \Omega^{B}(\xi,\overline{\xi},\nu,\overline{\nu})=\frac{\alpha}{|z|^{8}}\left|(\xi\cdot\nu)|z|^{2}-(\xi\cdot z)(z\cdot\nu)\right|^{2}+2\varepsilon U_{\beta}(\xi,\overline{\xi},\nu,\overline{\nu})\;.
    \end{equation*}
    On a point $z\in M$ with two zero coordinates (say $k$ and $l$), by equation (\ref{eq_U_beta}) we get
    \begin{align*}
        \Omega^{B}(\partial_{k},\overline{\partial}_{k},\partial_{l},\overline{\partial}_{l})_{z}&=2\varepsilon U_{\beta}(\partial_{k},\overline{\partial}_{k},\partial_{l},\overline{\partial}_{l})_{z}=-\frac{2\varepsilon}{|z|^{4}}<0\;.
    \end{align*}
\end{proof}

\subsection{HCFs evolution in the canonical family}
First of all, we compute the terms $S$ and $Q$ of the HCFs in the explicit case of a linear Hopf manifold equipped with $g(\alpha,\beta)$. By direct computations we get the Christoffel symbols of the Chern connection
\begin{equation*}
	\Gamma_{ij}^{k}=\frac{1}{|z|^{2}}\left(\frac{\beta}{\alpha}\delta_{i}^{k}\overline{z}_{j}-\delta_{j}^{k}\overline{z}_{i}\right)-\frac{\beta}{\alpha}\frac{\overline{z}_{i}\overline{z}_{j}z^{k}}{|z|^{4}}\;,
\end{equation*}
and the Chern curvatures
\begin{multline*}
	\Omega_{i\overline{j}k}^{l}=\frac{1}{|z|^{2}}\left[\delta_{k}^{l}\left(\delta_{ij}-\frac{\overline{z}_{i}z_{j}}{|z|^{2}}\right)-\frac{\beta}{\alpha}\delta_{i}^{l}\left(\delta_{jk}-\frac{\overline{z}_{k}z_{j}}{|z|^{2}}\right) \right. \\  \left. +\frac{\beta}{\alpha}\frac{(\delta_{jk}\overline{z}_{i}+\delta_{ij}\overline{z}_{k})|z|^{2}-2\overline{z}_{i}z_{j}\overline{z}_{k}}{|z|^{4}}z^{l}\right]\;;
\end{multline*}
\begin{equation*}
	\Theta^{(2)}_{i\overline{j}}=\frac{1}{|z|^{2}}\left[\left(n-1-\frac{\beta}{\alpha}\right)\delta_{ij}+\frac{\beta}{\alpha}\left(2n-1+\frac{\beta}{\alpha}(n-1)\right)\frac{\overline{z}_{i}z_{j}}{|z|^{2}}\right]\;.
\end{equation*}
The Chern torsion is $T_{ij}^{k}=\frac{1}{|z|^{2}}\left(\frac{\beta}{\alpha}+1\right)(\delta_{i}^{k}\overline{z}_{j}-\delta_{j}^{k}\overline{z}_{i})$ and we have the following quadratic terms in $T^{Ch}$:
\begin{align*}
	Q^{1}_{i\overline{j}}&=\frac{1}{|z|^{2}}\left(\frac{\beta}{\alpha}+1\right)^{2}\left[\frac{\alpha}{\alpha+\beta}\delta_{ij}+\left(n-2+\frac{\beta}{\alpha+\beta}\right)\frac{\overline{z}_{i}z_{j}}{|z|^{2}}\right]\;;\\
    Q^{2}_{i\overline{j}}&=\frac{2}{|z|^{2}}\left(\frac{\beta}{\alpha}+1\right)^{2}\frac{\alpha}{\alpha+\beta}\left[\delta_{ij}-\frac{\overline{z}_{i}z_{j}}{|z|^{2}}\right]\;;\\
    Q^{3}_{i\overline{j}}&=(n-1)^{2}\left(\frac{\beta}{\alpha}+1\right)^{2}\frac{\overline{z}_{i}z_{j}}{|z|^{4}}\;;\\
    Q^{4}_{i\overline{j}}&=\frac{1}{|z|^{2}}\left(\frac{\beta}{\alpha}+1\right)^{2}\frac{\alpha}{\alpha+\beta}(n-1)\left[\delta_{ij}-\frac{\overline{z}_{i}z_{j}}{|z|^{2}}\right]   \;.
\end{align*}

\begin{prop}\label{pr_general ODE}
    Let $M$ be a linear Hopf manifold equipped with the Hermitian metric $g(\alpha_{0},\beta_{0})$, then the HCF $\dot{g}=-S+aQ^{1}+bQ^{2}+cQ^{3}+dQ^{4}$ starting from $g(\alpha_{0},\beta_{0})$ evolves the metric as
    \begin{equation*}
	g(t)_{i\overline{j}}=\alpha(t)\frac{\delta_{ij}}{|z|^{2}}+\beta(t)\frac{\overline{z}_{i}z_{j}}{|z|^{4}}
\end{equation*} for $t\geq 0$, where $\alpha$ and $\beta$ satisfy the ODE system
\begin{equation}\label{general ODE}
    \begin{cases}
        \alpha(0)=\alpha_{0},\hspace{1em} \beta(0)=\beta_{0}\\
        \dot{\alpha}(t)=\frac{\beta}{\alpha}+1-n+\left(\frac{\beta}{\alpha}+1\right)(a+2b+(n-1)d)\\
        \dot{\beta}(t)=\frac{\beta}{\alpha}\left(1-2n-\frac{\beta}{\alpha}(n-1)\right)+\left(\frac{\beta}{\alpha}+1\right)^{2}(n-1)(a+(n-1)c)+\\
        \hspace{3.1em}-\left(\frac{\beta}{\alpha}+1\right)(a+2b+(n-1)d)
    \end{cases}
\end{equation}
\end{prop}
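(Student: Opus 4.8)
The plan is to reduce the HCF partial differential equation to the stated ODE system by exploiting the fact that all the curvature quantities entering the flow land in the same two-dimensional space of symmetric tensors. Concretely, I would first observe that every building block of the right-hand side — the second Chern-Ricci tensor $S=\Theta^{(2)}$ and each of the four quadratic torsion terms $Q^1,\dots,Q^4$ — has, by the explicit formulas already computed above for $g(\alpha,\beta)$, the shape
\begin{equation*}
    A\,\frac{\delta_{ij}}{|z|^2} + B\,\frac{\overline z_i z_j}{|z|^4},
\end{equation*}
where the scalars $A,B$ depend only on $n$ and on the ratio $\gamma\doteq\beta/\alpha$. Hence $-S+aQ^1+bQ^2+cQ^3+dQ^4$ stays inside the two-parameter family spanned by $\delta_{ij}/|z|^2$ and $\overline z_i z_j/|z|^4$, which is exactly the set of tangent directions available to the curve $g(\alpha,\beta)$.

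Next I would justify that the solution never leaves the family $g(\alpha,\beta)$. Since $S$ and $Q$ are built naturally (equivariantly) from $(g,J)$, every isometry of the initial data is an isometry of $-S+Q$ and is therefore preserved along the flow; as $g(\alpha_0,\beta_0)$ is $S^1\times U(n)$-invariant and, by Proposition \ref{prop: homogeneous}, all such invariant metrics are of the form $g(\alpha,\beta)$, uniqueness of the HCF solution forces $g(t)=g(\alpha(t),\beta(t))$ for as long as the flow exists. This closure was in fact already recorded in the discussion preceding Proposition \ref{prop: Cplx satisfied on Hopf}; equivalently one may simply insert the ansatz $g(t)=g(\alpha(t),\beta(t))$ with $\alpha,\beta$ solving (\ref{general ODE}) into the flow and invoke uniqueness.

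With the family preserved, the PDE becomes an ODE by matching coefficients: writing $\dot g_{i\bar j}=\dot\alpha\,\delta_{ij}/|z|^2+\dot\beta\,\overline z_i z_j/|z|^4$ and equating the two scalar coefficients with those of $-S+aQ^1+bQ^2+cQ^3+dQ^4$ yields two scalar ODEs. Reading off the $\delta_{ij}/|z|^2$ coefficient from $\Theta^{(2)}$, $Q^1$, $Q^2$, $Q^4$ (the $Q^3$ term contributing nothing here) gives $\dot\alpha=\gamma+1-n+(\gamma+1)(a+2b+(n-1)d)$, as stated; reading off the $\overline z_i z_j/|z|^4$ coefficients from all five tensors gives $\dot\beta$.

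The only genuinely delicate step is the bookkeeping in the $\dot\beta$ equation: the factors $\frac{\alpha}{\alpha+\beta}=\frac{1}{1+\gamma}$ and $\frac{\beta}{\alpha+\beta}=\frac{\gamma}{1+\gamma}$ appearing in $Q^1,Q^2,Q^4$ must be rewritten in terms of $\gamma$ and combined so that the common factors $(\gamma+1)$ and $(\gamma+1)^2$ emerge. For instance, the $a$-contribution to $\dot\beta$ arises from $Q^1$ as $(\gamma+1)^2(n-2)+(\gamma+1)\gamma$, and one verifies the elementary identity $(\gamma+1)^2(n-2)+(\gamma+1)\gamma=(\gamma+1)^2(n-1)-(\gamma+1)$, which is precisely what repackages the $a$-term as $(\gamma+1)^2(n-1)a-(\gamma+1)a$ inside the grouping of the statement. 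Once these simplifications and the analogous ones for $b$, $c$, $d$ are performed, the coefficients collect exactly into the displayed system (\ref{general ODE}), and no further obstacle remains.
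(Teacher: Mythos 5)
Your proposal is correct and follows the same route the paper takes: the explicit formulas for $\Theta^{(2)}$ and $Q^1,\dots,Q^4$ computed just before the proposition show that the right-hand side of the flow stays in the span of $\delta_{ij}/|z|^2$ and $\overline z_i z_j/|z|^4$, the $S^1\times U(n)$-invariance (Proposition \ref{prop: homogeneous}) plus uniqueness keeps the solution in the $g(\alpha,\beta)$ family, and matching coefficients — including the identity $(\gamma+1)^2(n-2)+(\gamma+1)\gamma=(\gamma+1)^2(n-1)-(\gamma+1)$, which I checked — reproduces the system (\ref{general ODE}) exactly.
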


Thus we now turn to the study of the behaviour of the Bismut-Griffiths-non-negativity on linear Hopf manifolds equipped with $g(\alpha,\beta)$ metrics under the action of the HCFs. Since the Bismut-Griffiths-non-negativity of the metric $g(\alpha,\beta)$ only depends on the ratio $\gamma$, we give a useful result (Theorem \ref{th_stability}) which describes the evolution of $\gamma$ through the action of the HCFs.

Notice that in the ODE (\ref{general ODE}) both $\dot{\alpha}$ and $\dot{\beta}$ depends only on $\gamma=\frac{\beta}{\alpha}$. Thus we can rewrite the ODE system as
\begin{equation*}
    \begin{cases}
        \alpha(0)=\alpha_{0},\hspace{1em} \beta(0)=\beta_{0}\\
        \dot{\alpha}(t)=\gamma+1-n+\left(\gamma+1\right)(a+2b+(n-1)d)\\
        \dot{\beta}(t)=\gamma\left(1-2n-\gamma(n-1)\right)+\left(\gamma+1\right)^{2}(n-1)(a+(n-1)c)+\\
        \hspace{3.1em}-\left(\gamma+1\right)(a+2b+(n-1)d)
    \end{cases}
\end{equation*}
Then the ratio $\gamma$ evolves as
\begin{equation}\label{ODE of gamma}
    \begin{cases}
        \gamma(0)=\frac{\beta_{0}}{\alpha_{0}}\\
        \dot{\gamma}=\frac{1}{\alpha}(\gamma+1)\left[(F-n)\gamma + F\right]\\
        \dot{\alpha}=(\gamma+1)L-n
    \end{cases}
\end{equation}
where $F(a,b,c,d,n)=(n-2)a-2b+(n-1)^{2}c-(n-1)d$ and\\ $L(a,b,c,d)=1+a+2b+(n-1)d$.\\
Recall that $\beta>-\alpha$, so $\gamma>-1$. Thus $\gamma$ can not be equal to $-1$ and it can be equal to $\frac{F}{n-F}$ only if $F<n$. If we start with $\alpha_{0},\beta_{0}$ with ratio $\frac{F}{n-F}$, then $\alpha$ and $\beta$ will evolve as two straight lines, meaning that when $\gamma_{0}=\frac{F}{n-F}$ the flow acts on the metric by homotheties: these metrics are called static. Moreover, the following result shows that these static metrics are globally stable for the HCFs among the $g(\alpha,\beta)$ metrics.
\begin{theorem}\label{th_stability}
Consider an $n$-dimensional linear Hopf manifold and the HCF $\dot{g}=-S+aQ^{1}+bQ^{2}+cQ^{3}+dQ^{4}$. Suppose that the coefficients $(a,b,c,d)$ are such that $F(a,b,c,d,n)<n$, then the metric $g(1,\frac{F}{n-F})$ (as well as any of its multiples) is static for the flow. Moreover, any metric $g(\alpha_{0},\beta_{0})$ evolves along the flow so that the ratio $\gamma$ converges to $\frac{F}{n-F}$.
\end{theorem}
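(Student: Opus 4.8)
The plan is to read the dynamics of the ratio $\gamma=\beta/\alpha$ directly off the system (\ref{ODE of gamma}), treating it as a one–dimensional phase line in which the coupling to $\alpha$ enters only through the strictly positive factor $1/\alpha$. Write $\gamma_{*}=\frac{F}{n-F}$, which is well defined because $F<n$. First I would record two elementary facts: that $\gamma_{*}$ is the unique zero of the bracket $(F-n)\gamma+F$, since $(F-n)\gamma_{*}+F=0$, and that it is an admissible ratio, because
\[
\gamma_{*}+1=\frac{F}{n-F}+1=\frac{n}{n-F}>0 ,
\]
so $\gamma_{*}>-1$. For the static claim, if $\gamma_{0}=\gamma_{*}$ then the $\gamma$-equation of (\ref{ODE of gamma}) gives $\dot\gamma\equiv0$, hence $\gamma\equiv\gamma_{*}$; the remaining equation then reads $\dot\alpha=(\gamma_{*}+1)L-n$, a constant, so $\alpha$ (and $\beta=\gamma_{*}\alpha$) is affine in $t$ and $g(t)=\tfrac{\alpha(t)}{\alpha_{0}}\,g(\alpha_{0},\beta_{0})$ evolves by homothety. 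This shows $g(1,\gamma_{*})$ and all its multiples are static.

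For the asymptotic statement I would first establish monotonicity. Since $\alpha>0$ and $\gamma+1>0$ along the flow, the sign of $\dot\gamma$ equals the sign of $(F-n)(\gamma-\gamma_{*})$; as $F-n<0$, the solution $\gamma(t)$ is strictly monotone towards $\gamma_{*}$ (increasing if $\gamma_{0}<\gamma_{*}$, decreasing if $\gamma_{0}>\gamma_{*}$), and it cannot cross $\gamma_{*}$ by uniqueness. Hence $\gamma(t)$ stays in the compact interval with endpoints $\gamma_{0}$ and $\gamma_{*}$ and converges to some limit $L_{\infty}$ lying between $\gamma_{0}$ and $\gamma_{*}$. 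It remains to rule out $L_{\infty}\neq\gamma_{*}$.

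The clean way to pin down the limit is to reparametrise time by $d\tau=dt/\alpha$. In the new variable the first line of (\ref{ODE of gamma}) becomes the autonomous logistic-type equation
\[
\frac{d\gamma}{d\tau}=(F-n)(\gamma+1)(\gamma-\gamma_{*}),
\]
whose right-hand side is a downward parabola in $\gamma$ with roots $-1<\gamma_{*}$; on the admissible half-line $\gamma>-1$ every solution converges to the stable rest point $\gamma_{*}$ as $\tau\to+\infty$. Thus the argument reduces to showing the reparametrisation is complete, i.e. $\tau(t)=\int_{0}^{t}ds/\alpha(s)\to+\infty$ as $t$ tends to the maximal existence time $T$ of (\ref{ODE of gamma}).

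This last point is the step I expect to be the main obstacle, and I would dispatch it as follows. Because $\gamma$ is bounded, $\dot\alpha=(\gamma+1)L-n$ is bounded, so $\alpha$ is Lipschitz and can neither blow up nor grow faster than linearly in finite time. A standard escape-from-compact-sets argument for (\ref{ODE of gamma}) on the region $\{\alpha>0,\ \gamma>-1\}$ then leaves only two scenarios: either $T=+\infty$, in which case $\alpha(t)\le \alpha_{0}+Ct$ and $\int_{0}^{\infty}ds/\alpha$ diverges; or $T<\infty$, in which case $\gamma$ is confined to a compact subinterval away from $-1$ and $+\infty$, so the escape must be through $\alpha\to0$, and the Lipschitz bound $\alpha(t)\le K(T-t)$ again forces $\int_{0}^{T}ds/\alpha=+\infty$. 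In both cases $\tau\to+\infty$, and the autonomous analysis then yields $L_{\infty}=\gamma_{*}$, completing the proof.
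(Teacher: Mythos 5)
Your proof is correct, but it takes a genuinely different route from the paper's. After the common observations (the static ratio $\gamma_*=\tfrac{F}{n-F}$ is the unique admissible zero of the bracket, and $\gamma$ is monotone towards $\gamma_*$ and cannot cross it), the paper finishes by a comparison argument with a case split on the sign of $(\gamma_*+1)L-n$: when $\alpha$ is eventually decreasing it derives a contradiction from $\gamma\to\gamma_\infty\neq\gamma_*$ via $\alpha\to 0$ in finite time forcing $\dot\gamma$ to blow up; when $\alpha$ grows it bounds $\alpha$ by an affine function and compares $\gamma$ with the explicit solution of $(\alpha_0+At)\dot\gamma=(F-n)\gamma+F$, obtaining a power-law rate $\gamma-\gamma_*=O(t^{(F-n)/A})$. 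You instead reparametrise time by $d\tau=dt/\alpha$, reduce to the autonomous logistic equation $\gamma'=(F-n)(\gamma+1)(\gamma-\gamma_*)$ with $\gamma_*$ globally attracting on $\{\gamma>-1\}$, and then show completeness of the time change ($\int dt/\alpha=+\infty$) using only that $\dot\alpha$ is bounded, splitting on whether the maximal existence time is finite (escape through $\alpha\to0$, Lipschitz bound $\alpha\le K(T-t)$) or infinite (at most linear growth of $\alpha$). Your version is more conceptual: it removes the case analysis on $L$, treats finite- and infinite-time existence uniformly, and isolates the one genuine analytic issue (completeness of $\tau$). What the paper's computation buys in exchange is an explicit convergence rate in the expanding case, which the qualitative phase-line argument does not directly produce. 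Two small points worth making explicit in your write-up: the non-crossing of $\gamma_*$ uses local Lipschitz continuity of the right-hand side on $\{\alpha>0\}$ together with invariance of the line $\{\gamma=\gamma_*\}$; and in the case $T=+\infty$ with $\sup\dot\alpha\le 0$ one should note separately that $\alpha$ is then bounded above by $\alpha_0$, so the integral still diverges.
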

\begin{proof}
First of all, notice that the flows act by homothety on the static metrics, thus they are exactly those for which
\begin{equation*}
    \dot{\gamma}=\frac{1}{\alpha}(\gamma+1)\left[(F-n)\gamma + F\right]=0\; .
\end{equation*}
Since $\gamma>-1$, this could be the case only if $F<n$ and we get $\gamma=\frac{F}{n-F}$.\\
Now suppose that the starting metric $g(\alpha_{0},\beta_{0})$ has ratio $\gamma_{0}<\frac{F}{n-F}$. By the evolution equation (\ref{ODE of gamma}) for $\gamma$ we know that $\gamma$ is strictly increasing along the flow, moreover, it is bounded above from $\frac{F}{n-F}$. We now distinguish two cases, depending on $L$: when $\alpha$ is decreasing along the flow, meaning that $(\frac{F}{n-F}+1)L-n\leq 0$, and when it is not.\\
In the first case, suppose that $\gamma$ does not converge to $\frac{F}{n-F}$, then it needs to converge to some $\gamma_{\infty}$ with $\gamma_{0}<\gamma_{\infty}<\frac{F}{n-F}$. In this way
\begin{equation*}
    \dot{\alpha}<(\gamma_{\infty}+1)L-n<\left(\frac{F}{n-F}+1\right)L-n\leq 0\;,
\end{equation*} 
thus $\dot{\alpha}$ is uniformly strictly negative and so $\alpha$ will get to zero in finite time, say $T$; at the same time $T$, $\gamma$ will be increasing with infinite speed (by eq. (\ref{ODE of gamma})), which is a contradiction to the convergence $\gamma\rightarrow\gamma_{\infty}$.\\
In the second case, namely $\left(\frac{F}{n-F}+1\right)L-n> 0$, we can suppose without loss of generality that $(\gamma_{0}+1)L-n> 0$. Moreover,  since the term $(\gamma+1)$ in the formula of $\dot{\gamma}$ is positive and increasing we can suppress it and prove the convergence of $\gamma$ to $\frac{F}{n-F}$ with evolution equation
\begin{equation*}
    \dot{\gamma}=\frac{1}{\alpha}\left[(F-n)\gamma + F\right]\;.
\end{equation*}
Since $\gamma$ is bounded from above, then also $\dot{\alpha}$ is so. This means that we can bound $\alpha$ above with a straight line with positive slope $\alpha\leq\alpha(0)+At$. Thus finally we have $(\alpha(0)+At)\dot{\gamma}(t)\geq (F-n)\gamma(t)+F$. We have an explicit solution for this ODE
\begin{equation*}
\gamma(t)\geq C(At+\alpha(0))^{\frac{F-n}{A}}+\frac{F}{n-F}\;,
\end{equation*}
where the constant $C$ depends on the initial value $\gamma_{0}$. Since the exponent $\frac{F-n}{A}$ is negative we get the convergence to $\frac{F}{n-F}$ for $t\rightarrow\infty$.\\
A similar argument holds true also in the opposite case, namely if $\gamma_{0}>\frac{F}{n-F}$.
\end{proof}

This is another evidence of stability results as in \cite{Streets_2011}. Here we have global stability in the non-KE setting; however, this is an extremely particular case since it refers to linear Hopf manifolds equipped with a metric $g(\alpha,\beta)$.\\

\subsection{Bismut-Griffiths-non-negativity under the action of HCFs}
In this section we detect a subset of HCF of flows which preserve Bismut-Griffiths-non-negativity on linear Hopf manifolds equipped with a $g(\alpha,\beta)$ metric (see Theorem \ref{Th inequality all dimensions}). This subfamily is prescribed by inequalities of the coefficients $(a,b,c,d)$ characterizing the HCFs which depend on the dimension $n$. Take $\gamma_{n}$ as in Proposition \ref{Lemma_gamma-n}.
\begin{theorem}\label{Th inequality all dimensions}
    Consider an $n$-dimensional linear Hopf manifold equipped with a metric $g(\alpha_{0},\beta_{0})$, and suppose that $(n-2)a-2b+(n-1)^{2}c-(n-1)d\leq n\frac{\gamma_{n}}{\gamma_{n}+1}$. Then if the metric $g(\alpha_{0},\beta_{0})$ is Bismut-Griffiths-non-negative, the HCF with coefficients $(a,b,c,d)$ starting at $g(\alpha_{0},\beta_{0})$ preserves the Bismut-Griffiths-non-negativity. 
\end{theorem}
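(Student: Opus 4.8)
The plan is to reduce the statement to a one-dimensional invariance property for the scalar ODE \eqref{ODE of gamma} governing the ratio $\gamma=\beta/\alpha$. By Proposition \ref{Lemma_gamma-n} the metric $g(\alpha,\beta)$ is Bismut-Griffiths-non-negative precisely when $\gamma\le\gamma_n$, and by Proposition \ref{pr_general ODE} the flow keeps the evolving metric inside the family $g(\alpha(t),\beta(t))$ for $t\ge 0$, with $\gamma$ obeying \eqref{ODE of gamma}. Hence ``the HCF preserves Bismut-Griffiths-non-negativity'' is exactly the assertion that the half-line $(-1,\gamma_n]$ is forward-invariant for the $\gamma$-flow, and this is what I would prove.

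First I would translate the hypothesis into the language of the static ratio. Writing $F=F(a,b,c,d,n)$ for the left-hand side of the assumed inequality, the map $F\mapsto \frac{F}{n-F}$ is increasing for $F<n$ and sends the threshold $n\frac{\gamma_n}{\gamma_n+1}$ to $\gamma_n$, so a direct manipulation gives
\[
F\le n\frac{\gamma_n}{\gamma_n+1}\iff \frac{F}{n-F}\le\gamma_n .
\]
Since $\gamma_n\in\{0,-\tfrac12\}$ yields $n\frac{\gamma_n}{\gamma_n+1}<n$, the hypothesis automatically forces $F<n$. Therefore Theorem \ref{th_stability} applies: the static ratio is $\gamma^\ast:=\frac{F}{n-F}$, it satisfies $\gamma^\ast\le\gamma_n$, and every trajectory converges to it.

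The core step is a boundary sign computation that upgrades convergence to invariance. Evaluating \eqref{ODE of gamma} at $\gamma=\gamma_n$ and using $\gamma_n+1>0$, $\alpha>0$, $F-n<0$ together with $\gamma_n\ge\gamma^\ast$, I would estimate
\[
\dot\gamma\big|_{\gamma=\gamma_n}=\frac{1}{\alpha}(\gamma_n+1)\bigl[(F-n)\gamma_n+F\bigr]\le \frac{1}{\alpha}(\gamma_n+1)\bigl[(F-n)\gamma^\ast+F\bigr]=0,
\]
the last equality holding by the defining relation $(F-n)\gamma^\ast+F=0$ for $\gamma^\ast$. Thus the vector field of the $\gamma$-ODE points into (or is tangent to) the region $\{\gamma\le\gamma_n\}$ along its boundary, so a standard comparison argument shows $\gamma_0\le\gamma_n$ implies $\gamma(t)\le\gamma_n$ for all $t\ge 0$. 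Concretely: if $\gamma_0<\gamma^\ast$ the ratio increases monotonically but never exceeds $\gamma^\ast\le\gamma_n$, while if $\gamma^\ast\le\gamma_0\le\gamma_n$ it decreases and stays below $\gamma_0\le\gamma_n$; in either case $\gamma(t)\le\gamma_n$, i.e. the metric remains Bismut-Griffiths-non-negative.

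I expect the only genuine subtlety to be conceptual rather than computational: Theorem \ref{th_stability} on its own yields merely the asymptotic convergence $\gamma\to\gamma^\ast$, which does not preclude a transient overshoot of $\gamma_n$. The monotonicity encoded in the sign of $(F-n)\gamma+F$ (negative above $\gamma^\ast$, positive below) is precisely what rules out overshoot and promotes convergence to forward-invariance of $(-1,\gamma_n]$. The algebraic equivalence of the hypothesis with $\gamma^\ast\le\gamma_n$ and the boundary sign estimate are routine, so once the reduction to the scalar ODE \eqref{ODE of gamma} is in place no serious obstacle remains.
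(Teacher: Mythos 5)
Your proof is correct and follows essentially the same route as the paper: reduce to the scalar ODE \eqref{ODE of gamma} for $\gamma$, observe that the hypothesis is equivalent to $\frac{F}{n-F}\le\gamma_n$ with $F<n$, and conclude from the dynamics of the ratio. You are in fact slightly more careful than the paper, whose written proof only cites the convergence statement of Theorem \ref{th_stability} (which by itself would not exclude a transient overshoot of $\gamma_n$); the monotonicity that rules this out appears only inside the proof of Theorem \ref{th_stability}, and your explicit boundary-sign estimate at $\gamma=\gamma_n$ closes that small expository gap.
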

\begin{proof}
    Notice that since the metric $g(\alpha_{0},\beta_{0})$ is Bismut-Griffiths-non-negative, the initial ratio $\gamma_{0}$ must be $\gamma_{0}\leq\gamma_{n}$. Moreover, we have that
    \begin{equation*}
        F(a,b,c,d,n)=(n-2)a-2b+(n-1)^{2}c-(n-1)d\leq n\frac{\gamma_{n}}{\gamma_{n}+1}\leq 0<n
    \end{equation*}
    thus by Theorem \ref{th_stability} the ratio $\gamma$ will evolve along the flow converging to a value $\gamma_{\infty}=\frac{F}{n-F}\leq\gamma_{n}$. This means that the metric will remain Bismut-Griffiths-non-negative along the flow.
\end{proof}
On the other hand, when the above inequality is not satisfied, the flow does not preserve Bismut-Griffiths-non-negativity. More precisely
\begin{prop}\label{pr_sharpness}
    On linear Hopf manifolds of dimension $n$ the HCFs with coefficients $(a,b,c,d)$ such that $(n-2)a-2b+(n-1)^{2}c-(n-1)d>n\frac{\gamma_{n}}{\gamma_{n}+1}$ do not preserve Bismut-Griffiths-non-negativity. 
\end{prop}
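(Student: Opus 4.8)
The plan is to establish sharpness by exhibiting a single Bismut-Griffiths-non-negative initial datum whose ratio $\gamma=\beta/\alpha$ is pushed strictly above the threshold $\gamma_n$ by the flow; by Proposition~\ref{Lemma_gamma-n} this immediately destroys non-negativity. The natural candidate is the boundary metric, so first I would take $g(\alpha_0,\beta_0)$ with $\gamma_0=\beta_0/\alpha_0=\gamma_n$. Proposition~\ref{Lemma_gamma-n} gives Bismut-Griffiths-non-negativity precisely for $\gamma\le\gamma_n$, so the boundary value $\gamma_0=\gamma_n$ qualifies and this starting metric is genuinely Bismut-Griffiths-non-negative.

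The key computation is to read off the sign of $\dot\gamma$ at the initial time from the evolution equation~(\ref{ODE of gamma}). Evaluating at $t=0$ gives
\[
\dot\gamma|_{t=0}=\frac{1}{\alpha_0}(\gamma_n+1)\bigl[(F-n)\gamma_n+F\bigr],
\]
where $F=F(a,b,c,d,n)=(n-2)a-2b+(n-1)^2c-(n-1)d$. Since $\gamma_n\in\{0,-\tfrac12\}$ we have $\gamma_n+1>0$, and $\alpha_0>0$, so the sign of $\dot\gamma|_{t=0}$ is that of $(F-n)\gamma_n+F=F(\gamma_n+1)-n\gamma_n$. The whole point is that the hypothesis is exactly this positivity: multiplying $F>n\frac{\gamma_n}{\gamma_n+1}$ by $\gamma_n+1>0$ yields $F(\gamma_n+1)>n\gamma_n$, i.e.\ $(F-n)\gamma_n+F>0$, and hence $\dot\gamma|_{t=0}>0$.

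To finish, I would invoke short-time existence for the flow (equivalently, local existence and uniqueness for the smooth ODE system~(\ref{general ODE}) near $t=0$, valid since $\alpha_0>0$) together with continuity of $t\mapsto\gamma(t)$. Because $\gamma(0)=\gamma_n$ and $\dot\gamma(0)>0$, we get $\gamma(t)>\gamma_n$ for all sufficiently small $t>0$; by Proposition~\ref{Lemma_gamma-n} the evolved metrics $g(\alpha(t),\beta(t))$ are then not Bismut-Griffiths-non-negative, so the flow does not preserve the property.

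I do not expect a serious obstacle: the argument reduces to a one-line sign computation once the boundary initial datum is chosen, and it works uniformly regardless of whether $F<n$. In the subcritical range $n\frac{\gamma_n}{\gamma_n+1}<F<n$ it is consistent with Theorem~\ref{th_stability}, where $\gamma$ converges to the static value $\frac{F}{n-F}>\gamma_n$, while for $F\ge n$ the ratio simply increases without a finite static limit. The only points requiring a little care are confirming that the closed boundary case $\gamma_0=\gamma_n$ counts as Bismut-Griffiths-non-negative and that the flow is well posed for short time; both are immediate from the statements already established.
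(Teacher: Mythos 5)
Your proposal is correct and follows essentially the same route as the paper: start the flow at the boundary metric with ratio $\gamma_0=\gamma_n$ and check that $\dot\gamma(0)=\frac{1}{\alpha_0}(\gamma_n+1)\left[(F-n)\gamma_n+F\right]>0$, so that $\gamma(t)$ exceeds $\gamma_n$ for small $t>0$ and Proposition~\ref{Lemma_gamma-n} kills non-negativity. Your single algebraic equivalence $F>n\frac{\gamma_n}{\gamma_n+1}\iff(F-n)\gamma_n+F>0$ even streamlines the paper's separate treatment of the cases $F\ge n$ and $F<n$.
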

\begin{proof}
    To prove the statement we suppose to perform the HCF with coefficients $(a,b,c,d)$ starting from the metric $g(1,\gamma_{n})$. By hypothesis this is Bismut-Griffiths-non-negative and $\gamma_{n}$ is the largest ratio for which this happens; thus we just have to verify that $\dot{\gamma}(0)>0$.\\
    In case $F\geq n$, we get that $(F-n)\gamma+F>0$ for any $\gamma$ since $\gamma>-1$. Otherwise, if 
    \begin{equation*}
        n>F(a,b,c,d,n)=(n-2)a-2b+(n-1)^{2}c-(n-1)d>n\frac{\gamma_{n}}{\gamma_{n}+1}
    \end{equation*}
    then $(F-n)\gamma_{n}+F>0$. Thus, if the inequality in the statement is satisfied, using the evolution equation (\ref{ODE of gamma}) for $\gamma$ we get that
    \begin{equation*}
        \dot{\gamma}(0)=\frac{1}{\alpha}(\gamma_{n}+1)\left[(F-n)\gamma_{n}+F\right]>0
    \end{equation*}
    and the thesis is proved.
\end{proof}
\begin{rmk*}
    Proposition \ref{pr_sharpness} shows that the inequalities of Theorem \ref{Th inequality all dimensions} are sharp, meaning that they detect the largest set of HCFs which preserve Bismut-Griffiths-non-negativity on linear Hopf manifolds equipped with metrics $g(\alpha,\beta)$.
\end{rmk*}

\subsection{Interesting flows}\label{sec_interesting flows}
We can use the above set of inequalities to check if some interesting flow preserve Bismut-Griffiths-non-negativity on linear Hopf manifolds with metrics $g(\alpha,\beta)$.\\
The Gradient flow of Streets and Tian, that is the one with coefficients $a=\frac{1}{2},b=-\frac{1}{4},c=-\frac{1}{2},d=1$, has $\frac{F}{n-F}=-\frac{n-1}{n+1}$; hence, $\frac{F}{n-F}\leq -\frac{1}{2} = \gamma_{n}$ for any $n>2$ and $\frac{F}{n-F}=-\frac{1}{3}$ when $n=2$. 
On the other hand, for the Ustinovskiy flow $F=1>0$ regardless of the dimension.\\
Beside these two flows we are interested in the \textit{pluriclosed flow}, introduced by Streets and Tian in \cite{Streets_2010}. They identified a particular choice of $Q$ which yields a flow that preserves the pluriclosed condition and so has a natural link with the Bismut connection. Specifically, in our notation $Q$ is identified by $a=1, b=c=d=0$. With this coefficients we get $F=n-2$. \\
Comparing these values with the inequalities in the previous results we have
\begin{prop*}
    Consider a linear Hopf manifold (of any dimension) equipped with a Bismut-Griffiths-non-negative metrics $g(\alpha_{0},\beta_{0})$, then
    \begin{itemize}
        \item the gradient flow of Streets and Tian starting at $g(\alpha_{0},\beta_{0})$ evolves preserving the Bismut-Griffiths-non-negativity;
        \item the Ustinovskiy flow starting at $g(\alpha_{0},\beta_{0})$ does not preserve the Bismut-Griffiths-non-negativity;
        \item unless $n=2$, the pluriclosed flow starting at $g(\alpha_{0},\beta_{0})$ does not preserve the Bismut-Griffiths-non-negativity.
    \end{itemize}
\end{prop*}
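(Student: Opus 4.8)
The plan is to reduce the whole statement to the sharp dichotomy already in hand. By Theorem~\ref{Th inequality all dimensions} together with Proposition~\ref{pr_sharpness}, an HCF with coefficients $(a,b,c,d)$ preserves Bismut-Griffiths-non-negativity on the family $g(\alpha,\beta)$ exactly when
\[
F(a,b,c,d,n)=(n-2)a-2b+(n-1)^{2}c-(n-1)d\le n\frac{\gamma_{n}}{\gamma_{n}+1},
\]
and fails to do so when the reverse strict inequality holds. So the entire proof is a matter of evaluating $F$ for the three flows and comparing with the threshold. Since $\gamma_{2}=0$ and $\gamma_{n}=-\tfrac12$ for $n\ge3$ (Proposition~\ref{Lemma_gamma-n}), this threshold equals $0$ when $n=2$ and $-n$ when $n\ge3$; equivalently, because $F\mapsto\frac{F}{n-F}$ is increasing for $F<n$, the criterion may be read as $\frac{F}{n-F}\le\gamma_{n}$, i.e. the static limit ratio from Theorem~\ref{th_stability} lies in the non-negativity range.

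First I would treat the gradient flow $(a,b,c,d)=(\tfrac12,-\tfrac14,-\tfrac12,1)$. A direct substitution gives $F=-\tfrac{n(n-1)}{2}$, hence $\frac{F}{n-F}=-\frac{n-1}{n+1}$. Comparing with $\gamma_{n}$, one checks that $-\frac{n-1}{n+1}\le-\tfrac12$ precisely for $n\ge3$ (with equality at $n=3$), while for $n=2$ one has $-\tfrac13\le 0=\gamma_{2}$. Thus in every dimension the hypothesis of Theorem~\ref{Th inequality all dimensions} is met and the gradient flow preserves Bismut-Griffiths-non-negativity.

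Next, for the Ustinovskiy flow the excerpt records $F=1$ independently of $n$. Since the threshold is $0$ for $n=2$ and $-n<0$ for $n\ge3$, we have $F=1>n\frac{\gamma_{n}}{\gamma_{n}+1}$ in every dimension, so Proposition~\ref{pr_sharpness} applies and non-negativity is not preserved. Finally, for the pluriclosed flow $(a,b,c,d)=(1,0,0,0)$ a substitution yields $F=n-2$. For $n=2$ this equals the threshold $0$ exactly, so the non-strict inequality of Theorem~\ref{Th inequality all dimensions} holds and the flow preserves non-negativity on the Hopf surface; for $n\ge3$ one has $n-2>-n$, so Proposition~\ref{pr_sharpness} gives non-preservation.

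The only delicate point, and hence the main (though modest) obstacle, is the bookkeeping at the boundary cases. The value of the threshold changes form at $n=2$ because $\gamma_{2}=0$ rather than $-\tfrac12$, and the pluriclosed flow lands exactly on the threshold there; one must therefore invoke the non-strict direction of Theorem~\ref{Th inequality all dimensions} rather than Proposition~\ref{pr_sharpness} to conclude preservation, and similarly note that the gradient flow meets the criterion with equality at $n=3$. Keeping these equality cases and the dimension-dependent value of $\gamma_{n}$ straight is the crux; everything else is routine substitution into the established criterion.
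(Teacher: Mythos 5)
Your proposal is correct and follows exactly the paper's own route: the paper likewise proves this proposition by computing $F(a,b,c,d,n)$ for the three flows (obtaining $\frac{F}{n-F}=-\frac{n-1}{n+1}$ for the gradient flow, $F=1$ for the Ustinovskiy flow, and $F=n-2$ for the pluriclosed flow) and comparing against the threshold of Theorem \ref{Th inequality all dimensions} and Proposition \ref{pr_sharpness}. Your care with the boundary cases (equality at $n=2$ for the pluriclosed flow and at $n=3$ for the gradient flow) is consistent with the non-strict inequality in Theorem \ref{Th inequality all dimensions}, so nothing is missing.
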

\begin{rmk*}
    We saw that the pluriclosed flow performed on linear Hopf surfaces with metrics $g(\alpha,\beta)$ preserves the Bismut-Griffiths-non-negativity. As a matter of fact, a metric $g(\alpha,\beta)$ on the linear Hopf manifold is pluriclosed if and only if $n=2$; thus it is interesting that the pluriclosed flow behave well with Bismut-Griffiths-non-negativity only in dimension two.
\end{rmk*}  

\section{Appendix. Computations on six-dimensional Calabi-Yau solvmanifolds}
We collect here the computations on six-dimensional Calabi-Yau solvmanifolds that lead to Theorems \ref{th_cplx solvmanifolds}, \ref{prop_positivity solvmanifolds} and \ref{th_HCF on solvmanifolds}.\\
Some of the following computation were performed with the help of the symbolic computation software Sage \cite{sage}.
\subsection{Nilmanifolds}
\subsubsection{Holomorphically-parallelizable nilmanifolds in Family (Np)}
Consider six-dimensional holomorphically-parallelizable nilmanifolds, i.e. nilmanifolds with holomorphically trivial tangent bundle. On these nilmanifolds the complex structure equations are
\begin{center}
    $d\varphi^{1}=d\varphi^{2}=0,\hspace{0.2em} d\varphi^{3}=\rho\varphi^{12}; \hspace{1em} \rho=0,1$
\end{center}
The case $\rho=0$ refers to the Torus which is K{\"a}hler and flat; thus we consider only the case of Iwasawa manifold ($\rho=1$).\\
A direct calculation shows that $\Omega_{ij\cdot\cdot}= \Omega_{\cdot\cdot kl}=0$ for any $i,j,k,l\in\{1,2,3\}$.\\
Moreover, we have computed the following determinant and coefficient: 
\begin{equation*}
    \Omega_{1\overline{1}3\overline{3}}\Omega_{2\overline{2}3\overline{3}}-\Omega_{1\overline{2}3\overline{3}}\Omega_{2\overline{1}3\overline{3}}=-\frac{t^{10}}{32\sqrt{-1}\det\Xi}
\end{equation*}
\begin{equation*}
     \Omega_{1\overline{1}3\overline{3}}=\frac{t^{4}(r^{2}t^{2}-|z|^{2})}{16\sqrt{-1}\det\Xi}
\end{equation*}
Showing that this Bismut curvature tensor is neither non-positive nor non-negative.

\subsubsection{Nilmanifolds in Family (Ni)}
Consider the generic Hermitian structure of this family
\begin{equation*}
    d\varphi^{1}=d\varphi^{2}=0;\hspace{0.4em} d\varphi^{3}=\rho \varphi^{12}+\varphi^{1\overline{1}}+\lambda\varphi^{1\overline{2}}+D\varphi^{2\overline{2}}
\end{equation*}where $\rho\in\{0,1\}$, $\lambda\geq 0$, $Im D\geq 0$.\\
According to equations (2.4–2.5) of \cite{MR3334093}, up to linear biholomorphism we can take $v=z=0$ and $r^{2}=1$ in the generic expression (\ref{meetric on nilmanifolds}):
\begin{equation*}
    2\omega=\sqrt{-1}(\varphi^{1\overline{1}}+s^{2}\varphi^{2\overline{2}}+t^{2}\varphi^{3\overline{3}})+u\varphi^{1\overline{2}}-\overline{u}\varphi^{2\overline{1}}
\end{equation*}  
Let us take the element $\Omega_{231\overline{3}}= \frac{\rho s^{2}t^{2}}{16\sqrt{-1}\det\Xi}$. It vanishes if and only if $\rho=0$.\\ Taking into account the classification of complex structures up to equivalence (see \cite{article}) we set the coefficients $\rho,\lambda$ and $D$ (and the Lie algebras) as follows: 
\begin{itemize}
    \item $(\rho,\lambda,D)=(0,0,\sqrt{-1})$, Lie algebra $\mathfrak{h}_{2}$;
    \item $(\rho,\lambda,D)=(0,0,\pm 1)$, Lie algebra $\mathfrak{h}_{3}$;
    \item $(\rho,\lambda,D)=(0,1,\frac{1}{4})$, Lie algebra $\mathfrak{h}_{4}$;
    \item $(\rho,\lambda)=(0,1)$ and $D\in[0,\frac{1}{4})$, Lie algebra $\mathfrak{h}_{5}$;
    \item $(\rho,\lambda,D)=(0,0,0)$, Lie algebra $\mathfrak{h}_{8}$.
\end{itemize}
In any of these cases the computation of the curvature elements yields that, for any $i,j,k,l\in\{1,2,3\}$ $\Omega_{ij\cdot\cdot}=\Omega_{\cdot\cdot kl}=0$.\\ 
Suppose $\lambda=0$ (thus Lie algebras $\mathfrak{h}_{2},\mathfrak{h}_{3}$ and $\mathfrak{h}_{8}$). From direct computations we get the following elements of the Bismut curvature tensor
\begin{align*}
    &\Omega_{1\overline{1}1\overline{1}}= t^{2}
    &&\Omega_{1\overline{1}2\overline{2}}= Re(D)t^{2}\\
    &\Omega_{2\overline{2}1\overline{1}}= Re(D)t^{2}
    &&\Omega_{2\overline{2}2\overline{2}}= |D|^{2}t^{2}\\
    &\Omega_{3\overline{3}1\overline{2}}= -\frac{Re(\sqrt{-1}D)}{(s^{2} - |u|^{2})}t^{4}u
    &&\Omega_{3\overline{3}2\overline{1}}= -\frac{Re(\sqrt{-1}D)}{(s^{2} - |u|^{2})}t^{4}\overline{u}\\
\end{align*}
Thus if $D=-1$ ($\mathfrak{h}_{3}$), then $\Omega_{1\overline{1}2\overline{2}}<0<\Omega_{1\overline{1}1\overline{1}}$ and the curvature tensor is neither non-negative nor non-positive. On the other hand, if $D=\sqrt{-1}$ ($\mathfrak{h}_{2}$), then the determinant
\begin{equation*}
    \Omega_{3\overline{3}1\overline{1}}\Omega_{3\overline{3}2\overline{2}}-\Omega_{3\overline{3}1\overline{2}}\Omega_{3\overline{3}2\overline{1}}=-\frac{t^{8}|u|^{2}}{(s^{2}-|u|^{2})^{2}}\leq 0\;.
\end{equation*}
Thus the Bismut curvature tensor is non-negative if and only if $u=0$. Finally, for $D=1$ or $D=0$ ($\mathfrak{h}_{3}$ or $\mathfrak{h}_{8}$) we have Bismut-Griffiths non-negativity.
\begin{rmk}\label{rmk: 1}
    Suppose we are in case of Lie algebra $\mathfrak{h}_{2}$, hence with coefficients $(\rho,\lambda,D)=(0,0,\sqrt{-1})$. Suppose also that $u=0$ (i.e. the metric $g$ is diagonal, since we are supposing $v=z=0$) then also $S$ and $Q$ are diagonal. This means that any HCF preserves the condition $u=v=z=0$.
\end{rmk}
Now we turn to the Lie algebras $\mathfrak{h}_{4}$ and $\mathfrak{h}_{5}$, for which we have computed the following element and the determinant of the curvature tensor:
\begin{equation*}
        \Omega_{1\overline{1}1\overline{1}}=t^{2}>0\;,
\end{equation*}
\begin{equation*}
    \Omega_{1\overline{1}1\overline{1}}\Omega_{1\overline{1}2\overline{2}}-\Omega_{1\overline{1}1\overline{2}}\Omega_{1\overline{1}2\overline{1}}=t^{4}(D-\frac{1}{4})\;.
\end{equation*}
Thus, in case of Lie algebra $\mathfrak{h}_{5}$ (i.e. $D<\frac{1}{4}$) the Bismut curvature tensor is neither non-positive nor non-negative. Now, if $D=\frac{1}{4}$, we compute the second Ricci tensors of The Bismut curvature $Ric2^{B}$. We have that
\begin{equation*}
    Ric2^{B}_{1\overline{1}}Ric2^{B}_{2\overline{2}}-Ric2^{B}_{1\overline{2}}Ric2^{B}_{2\overline{1}}=-\frac{|4s^{2}-4\sqrt{-1}\overline{u}+1|^{2}}{16(s^{2}-|u|^{2})^{2}}<0\;.
\end{equation*}

\subsubsection{Nilmanifolds in Family (Nii)} \label{Nii}
Consider the complex structure equations
\begin{equation*}
    d\varphi^{1}=0, \hspace{0.4em} d\varphi^{2}=\varphi^{1\overline{1}}, \hspace{0.4em} d\varphi^{3}=\rho\varphi^{12}+B\varphi^{1\overline{2}}+c\varphi^{2\overline{1}}
\end{equation*} 
where $\rho\in\{0,1\}$, $c\geq 0$, $B\in\mathbb{C}$ satisfying $(\rho,B,c)\neq (0,0,0)$.\\
If $\rho=0$ we have the coefficients 
\begin{align*}
    &\Omega_{231\overline{2}}=\frac{c(s^{2}t^{2}-|v|^{2})^{2}}{16\sqrt{-1}\det\Xi}, &&\Omega_{232\overline{1}}=\frac{-\overline{B}(s^{2}t^{2}-|v|^{2})^{2}}{16\sqrt{-1}\det\Xi}
\end{align*} 
These vanish only if $c=B=0$ which is impossible since $(\rho,B,c)\neq 0$. Thus we take $\rho=1$ and compute
\begin{equation*}
    \Omega_{232\overline{3}}=\frac{s^{2}t^{2} - |v|^{2}}{16\sqrt{-1}\det\Xi}t^{4}\overline{B}\;.
\end{equation*}
Hence also $B=0$. \\
Now we prove that $c=v=0$. First of all, if $c=0$, we have
\begin{equation*}
    \Omega_{231\overline{2}}=\frac{s^{2}t^{2}-|v|^{2}}{16\sqrt{-1}\det\Xi}\overline{v}^{2}\;,
\end{equation*}
which implies $v=0$; on the other hand, if $v=0$, we have
\begin{equation*}
    \Omega_{131\overline{3}}=ct^{4}\frac{r^{2}t^{2}-|z|^{2}}{16\sqrt{-1}\det\Xi}\;,
\end{equation*}
which implies $c=0$. Thus $c=0$ if and only if $v=0$. Suppose $c\neq 0$ (hence $v\neq 0$), then we compute the following elements of the Bismut curvature tensor:
\begin{multline*}
    \Omega_{131\overline{1}}=\left[\sqrt{-1}ct^{2}(r^{2}t^{2}\overline{z}+\sqrt{-1}t^{2}|u|^{2}+\overline{uv}z-uv\overline{z}-\overline{z}|z|^{2})\right. \\  
    \left. -\sqrt{-1}t^{2}u\overline{vz}-(cv+\overline{v})|z|^{2}\overline{v}\right]/16\sqrt{-1}\det\Xi
\end{multline*}
\begin{multline*}
    \Omega_{131\overline{2}}=\left[\sqrt{-1}ct^{2}(s^{2}t^{2}u-u|v|^{2}-\overline{v}|z|^{2}+r^{2}t^{2}\overline{v}-\sqrt{-1}s^{2}\overline{v}z)\right. \\
    \left. -\sqrt{-1}t^{2}u\overline{v}^{2}-(cv+\overline{v})\overline{v}^{2}z\right]/16\sqrt{-1}\det\Xi
\end{multline*}
\begin{multline*}
    \Omega_{121\overline{3}}=\left[ct^{2}(-8\sqrt{-1}\det\Xi+r^{2}|v|^{2}-\sqrt{-1}s^{4}z+s^{2}uv-\sqrt{-1}\overline{uv}z)\right.\\
    \left. +(\sqrt{-1}s^{2}\overline{v}z-u|v|^{2})(cv+\overline{v})\right]/8\sqrt{-1}\det\Xi
\end{multline*}
\begin{align*}
    \Omega_{121\overline{2}}=&t^{2}\frac{\sqrt{-1}u|v|^{2}+s^{2}\overline{v}z-ct^{2}\overline{u}z-\sqrt{-1}cr^{2}t^{2}v}{8\sqrt{-1}\det\Xi}
\end{align*}
From $\Omega_{131\overline{1}}=\Omega_{131\overline{2}}=0$ we get:
\begin{align*}
    \left[\sqrt{-1}ct^{2}(r^{2}t^{2}-|z|^{2})-(\sqrt{-1}t^{2}u+\overline{vz})(cv+\overline{v})\right]\overline{z}&=ct^{2}\overline{u}(t^{2}u-\sqrt{-1}\overline{v}z)\\
    \left[\sqrt{-1}ct^{2}(r^{2}t^{2}-|z|^{2})-(\sqrt{-1}t^{2}u+\overline{vz})(cv+\overline{v})\right]\overline{v}&=-ct^{2}s^{2}(\sqrt{-1}t^{2}u+\overline{v}z)
\end{align*}
Hence 
\begin{equation*}
    (\sqrt{-1}t^{2}u+\overline{v}z)(\overline{uv}+\sqrt{-1}s^{2}\overline{z})=0\;.
\end{equation*}
Notice that from this equations we also get that $u=0$ if and only if $z=0$; however they can not vanish or we would get $cv=0$ from $\Omega_{131\overline{2}}=0$. Thus $u,z$ ($v,c$) are different from zero and we distinguish two cases: $\sqrt{-1}t^{2}u+\overline{v}z=0$ and $\overline{uv}+\sqrt{-1}s^{2}\overline{z}=0$.
In the first case, we have
\begin{align*}
    0&=\sqrt{-1}ct^{2}(r^{2}t^{2}-|z|^{2})-(\sqrt{-1}t^{2}u+\overline{vz})(cv+\overline{v})\\
    &=\sqrt{-1}ct^{2}(r^{2}t^{2}-|z|^{2})\;,
\end{align*}
thus $c=0$, which is a contradiction.
In the second case, $\Omega_{121\overline{2}}=0$ and $\overline{uv}+\sqrt{-1}s^{2}\overline{z}=0$ imply
\begin{align*}
    0&=\sqrt{-1}u|v|^{2}+s^{2}\overline{v}z=ct^{2}(\overline{u}z+\sqrt{-1}r^{2}v)\;.
\end{align*}
Multiplying by $\overline{v}$ and using again $\overline{uv}+\sqrt{-1}s^{2}\overline{z}=0$ we obtain $s^{2}|z|^{2}=r^{2}|v|^{2}$. Finally, with these equations $\Omega_{121\overline{3}}$ become
\begin{equation*}
    \Omega_{121\overline{3}}=-ct^{2}\neq 0\;.
\end{equation*}
This shows that $v=c=0$ is needed to satisfy (Cplx).\\
With these parameters ($\rho=1; c=B=0$) and $v=0$ (Cplx) is satisfied and we get the following element and determinant of the Bismut curvature tensor:
\begin{align*}
    &\Omega_{2\overline{2}3\overline{3}}=\frac{s^{2}t^{6}}{16\sqrt{-1}\det\Xi}\\
    &\Omega_{2\overline{2}3\overline{3}}\Omega_{1\overline{1}3\overline{3}}-\Omega_{1\overline{2}3\overline{3}}\Omega_{2\overline{1}3\overline{3}}=-\frac{t^{10}}{32\sqrt{-1}\det\Xi}
\end{align*}
showing that the curvature tensor is neither non-negative nor non-positive.
\begin{rmk}\label{rmk: 2}
    With parameters ($\rho=1; c=B=0$) the condition $v=0$ (and hence (Cplx)) is preserved by any Hermitian curvature flow in the family HCF.
\end{rmk}

\subsubsection{Nilmanifolds in Family (Niii)}
Consider the complex structure equations 
\begin{equation*}
    d\varphi^{1}=0,\hspace{0.4em} d\varphi^{2}=\varphi^{13}+\varphi^{1\overline{3}},\hspace{0.4em} d\varphi^{3}=\sqrt{-1}\rho\varphi^{1\overline{1}}+\delta\sqrt{-1}(\varphi^{1\overline{2}}-\varphi^{2\overline{1}})
\end{equation*}
where $\rho\in\{0,1\}$ and $\delta=\pm 1$. From a direct computation we get the following elements of the Bismut curvature tensor:
\begin{align*}
    \Omega_{122\overline{3}}&=-\frac{(\sqrt{-1}uv + s^{2}z)v^{2}}{16\sqrt{-1}\det\Xi}\\
    \Omega_{121\overline{2}}&=-\frac{(\sqrt{-1}\delta\rho s^{2}z-\sqrt{-1}r^{2}v - \delta\rho uv -\overline{u}z)s^{2}v}{16\sqrt{-1}\det\Xi}\\
    \Omega_{132\overline{2}}&=\frac{(\sqrt{-1}t^{2}u+z\overline{v})s^{2}v}{16\sqrt{-1}\det\Xi}\\
    \Omega_{122\overline{1}}&=\frac{\sqrt{-1}s^{4}z^{2}-2s^{2}uvz+s^{2}\overline{u}vz-\sqrt{-1}u^{2}v^{2}+\sqrt{-1}v^{2}|u|^{2}}{16\sqrt{-1}\det\Xi}
\end{align*}
First of all we prove that $u,v$ and $z$ must vanish: suppose $v\neq 0$, then imposing $\Omega_{122\overline{3}}=0$ we get $s^{2}z=-\sqrt{-1}uv$. Now $\Omega_{121\overline{2}}=0$ implies $r^{2}v=\sqrt{-1}\overline{u}z$, and $\Omega_{132\overline{2}}=0$ implies $t^{2}u=\sqrt{-1}\overline{v}z$. These three equations together would imply that $\det\Xi=0$ which is a contradiction, thus $v$ must vanish. Moreover, if $v=0$ from $\Omega_{122\overline{1}}=0$ we get also $z=0$. Finally, $\Omega_{133\overline{2}}$ with $v=z=0$ is
\begin{equation*}
    \Omega_{133\overline{2}}=-\frac{s^{2}t^{2}(\sqrt{-1}s^{2}-t^{2})u}{16\sqrt{-1}\det\Xi}\;.
\end{equation*}
Thus, also $u$ mast vanish.\\
Now, for $u=v=z=0$ we have $\Omega_{133\overline{1}}=\frac{1}{2}(\delta\sqrt{-1}t^{2}-s^{2})\neq 0$,
showing that (Cplx) is never satisfied.

\subsection{Solvmanifolds}
\subsubsection{Solvmanifolds in Family (Si)}
Consider the generic Hermitian structure of this family
\begin{equation*}
    d\varphi^{1}=A(\varphi^{13}+\varphi^{1\overline{3}}),\hspace{0.4em} d\varphi^{2}=-A(\varphi^{23}+\varphi^{2\overline{3}}),\hspace{0.4em} d\varphi^{3}=0
\end{equation*}
where $A=\cos{\theta}+\sqrt{-1}\sin{\theta}$ and $\theta\in[0,\pi)$.\\
We directly compute (and set equal to zero)
\begin{equation*}
    \Omega_{123\overline{3}}=-\sqrt{-1}|A|^{2}\frac{ r^{2}uv^{2} + s^{2}z^{2}\overline{u} }{8\sqrt{-1}\det\Xi}\;.
\end{equation*}
This vanishes only if $r^{2}uv^{2} + s^{2}z^{2}\overline{u}=0$ since $A\neq 0$. We compute the following coefficients of the Bismut curvature tensor (where $I=\sqrt{-1}$):
\begin{multline*}
    \Omega_{133\overline{1}}=-\left[4Ar^{2}t^{2}|u|^{2} + (A+\overline{A})r^{4}|v|^{2} -(\overline{A}+3A)Ir^{2}z\overline{u}\overline{v}  \right.\\
    \left. + (A+\overline{A})Ir^{2}uv\overline{z} -(A - \overline{A})|u|^{2}|z|^{2}\right]\overline{A}/16\sqrt{-1}\det\Xi
\end{multline*}
\begin{multline*}
    \Omega_{133\overline{2}}=-\left[-4IAr^{2}s^{2}t^{2}u -(A+\overline{A})r^{2}s^{2}z\overline{v}  + (A-\overline{A})Ir^{2}|v|^{2}u \right.\\
    \left. + (3A-\overline{A})Is^{2}|z|^{2}u - (A - \overline{A})u^{2}v\overline{z}\right]\overline{A}/16\sqrt{-1}\det\Xi
\end{multline*}
\begin{multline*}
    \Omega_{233\overline{1}}=\left[-4IAr^{2}s^{2}t^{2}\overline{u} + (A+\overline{A})r^{2}s^{2}v\overline{z} + (3A-\overline{A})Ir^{2}|v|^{2}\overline{u} \right.\\
    \left. + (A-\overline{A})Is^{2}|z|^{2}\overline{u} + (A - \overline{A})z\overline{u}^{2}\overline{v}\right]\overline{A}/16\sqrt{-1}\det\Xi
\end{multline*}
\begin{multline*}
    \Omega_{233\overline{2}}=-\left[4As^{2}t^{2}|u|^{2} + (A+\overline{A})s^{4}|z|^{2} -(A+\overline{A})Is^{2}z\overline{u}\overline{v} \right.\\
    \left. + (3A+\overline{A})Is^{2}uv\overline{z} - (A - \overline{A})|u|^{2}|v|^{2}\right]\overline{A}/16\sqrt{-1}\det\Xi
\end{multline*}
The system of equations generated by the vanishing of these four coefficients has $u=v=z=0$ as unique solution. The computations follow exactly the same structure as for solvmanifolds in family (Siv3), see \S\ref{susec_Siv3}. Moreover a direct computation shows that with this hypothesis (Cplx) is satisfied.
\begin{rmk}\label{rmk: 3}
    The invariant metric $g$ with $u=v=z=0$ is Chern-flat. Moreover, with these parameters also $Q$ is diagonal; hence (Cplx) is preserved by any HCF.     
\end{rmk}
We computed the following elements of the Bismut curvature tensor:
\begin{align*}
    &\Omega_{1\overline{1}1\overline{1}}=2Re(A)^{2}\frac{r^{4}}{t^{2}},
    &\Omega_{1\overline{1}3\overline{3}}=-2r^{2}Re(A)^{2},
\end{align*} showing that if $A\neq\sqrt{-1}$ it is neither non-negative nor non-positive. \\ 
For parameter $A=\sqrt{-1}$, corresponding to the Lie algebra $\mathfrak{g}_{2}^{0}$, the diagonal metrics are K{\"a}hler, hence, K{\"a}hler-flat (see the Remark above). By \cite{boothby1958}, the complex solvmanifold is in fact biholomorphic to a holomorphically-parallelizable manifold.

\subsubsection{Solvmanifolds in Family (Sii)}
Consider the complex structure equations (where $x\in\mathbb{R}_{>0}$)
\begin{align*}
    d\varphi^{1}&=0\\ d\varphi^{2}&=-\frac{1}{2}\varphi^{13}-(\frac{1}{2}+\sqrt{-1}x)\varphi^{1\overline{3}}+\sqrt{-1}x\varphi^{3\overline{1}},\\
    d\varphi^{3}&=\frac{1}{2}\varphi^{12}+(\frac{1}{2}-\frac{\sqrt{-1}}{4x})\varphi^{1\overline{2}}+\frac{\sqrt{-1}}{4x}\varphi^{2\overline{1}}
\end{align*}
Working on the elements $\Omega_{232\overline{3}}$ and $\Omega_{233\overline{3}}$ (which we set equal to zero) we get $s^{2}=t^{2}$, see \cite{angella2018gauduchon} for details. Then
\begin{equation*}
    \Omega_{121\overline{2}}=\frac{t^{2}(2x-\sqrt{-1})}{16x}\neq 0\;,
\end{equation*}
and so (Cplx) is never satisfied.

\subsubsection{Solvmanifolds in Families (Siii1), (Siii3), (Siii4)}
Recall that the Lie algebras underlying (Siii1), (Siii3), and (Siii4) are, respectively, $\mathfrak{g}_{4}$, $\mathfrak{g}_{6}$, and $\mathfrak{g}_{7}$. In order to give a unified argument, we will gather the complex structure equations as follows:
\begin{equation*}
    d\varphi^{1}=\sqrt{-1}(\varphi^{13}+\varphi^{1\overline{3}}),\hspace{0.4em} d\varphi^{2}=-\sqrt{-1}(\varphi^{23}+\varphi^{2\overline{3}}),\hspace{0.4em} d\varphi^{3}=x\varphi^{1\overline{1}}+y\varphi^{2\overline{2}}
\end{equation*}
where $(x,y)=(\pm 1,0)$ for $\mathfrak{g}_{4}$, $(x,y)=(1,1)$ for $\mathfrak{g}_{6}$ and $(x\,,\,y=-x)=(\pm 1,\mp 1)$ for $\mathfrak{g}_{7}$. In particular $x\neq 0$.\\
Imposing the symmetries (Cplx) on the Bismut curvature tensor, we get that $y$ must be zero (meaning that the underling Lie algebra is $\mathfrak{g}_{4}$) and the metric described by the equation (\ref{meetric on nilmanifolds}) need to satisfy $u=v=z=0$; see \cite{angella2018gauduchon} for details.\\
With these condition (Cplx) is satisfied and the only non-zero coefficients of type $\Omega_{i\overline{j}k\overline{l}}$ of the curvature tensor is $\Omega_{1\overline{1}1\overline{1}}=t^{2}$.
\begin{rmk}\label{rmk: 4}
    If $u=v=z=0$ (i.e. the metric $g$ is diagonal) then also $S$ and $Q$ are diagonal. This means that any HCF preserves the condition $u=v=z=0$.
\end{rmk}

\subsubsection{Solvmanifolds in Family (Siii2)}
The complex structure equations for this family are the following:
\begin{equation*}
    d\varphi^{1}=\varphi^{13}+\varphi^{1\overline{3}},\hspace{0.4em}
    d\varphi^{2}=-\varphi^{23}-\varphi^{2\overline{3}},\hspace{0.4em} 
    d\varphi^{3}=\varphi^{1\overline{2}}+\varphi^{2\overline{1}}
\end{equation*}
Imposing the symmetries (Cplx) on the Bismut curvature tensor, we get that the metric described by the equation (\ref{meetric on nilmanifolds}) need to satisfy $v=z=0$; see \cite{angella2018gauduchon} for details.\\
From a direct computation we get 
\begin{align*}
    &\Omega_{133\overline{1}}=\sqrt{-1}r^{2}t^{2}u\frac{t^{2}+2\sqrt{-1}\overline{u}}{8\sqrt{-1}\det\Xi},
    &&\Omega_{133\overline{2}}=r^{2}s^{2}t^{2}\frac{t^{2}+2\sqrt{-1}u}{8\sqrt{-1}\det\Xi}
\end{align*}
$\Omega_{133\overline{2}}=0$ implies $t^{2}+2\sqrt{-1}u=0$, and then $\Omega_{133\overline{1}}=0$ leads to $t^{2}+2\sqrt{-1}\overline{u}$. These two equations together imply that $u$ is real which is in contradiction with both of them. This shows that (Cplx) is never satisfied.

\subsubsection{Solvmanifolds in Families (Siv1)}
Consider the complex structure equations for this family:
\begin{equation*}
    d\varphi^{1}=-\varphi^{13},\hspace{0.4em}
    d\varphi^{2}=\varphi^{23},\hspace{0.4em} 
    d\varphi^{3}=0
\end{equation*}
A direct computation shows that $\Omega_{ij\cdot\cdot}= \Omega_{\cdot\cdot kl}=0$ for any $i,j,k,l\in\{1,2,3\}$.\\
Moreover, we have the following coefficient and determinant of the curvature tensor:
\begin{align*}
    &\Omega_{1\overline{1}1\overline{1}}=\frac{r^{2}s^{2}-|u|^{2}}{16\sqrt{-1}\det\Xi}r^{4},
    &&\Omega_{1\overline{1}1\overline{1}}\Omega_{1\overline{1}3\overline{3}}-\Omega_{1\overline{1}1\overline{3}}\Omega_{1\overline{1}3\overline{1}}=-\frac{r^{2}s^{2}-|u|^{2}}{32\sqrt{-1}\det\Xi}r^{6}
\end{align*}

\subsubsection{Solvmanifolds in Families (Siv2)}
Recall the complex structure equations for this family:
\begin{equation*}
    d\varphi^{1}=2\sqrt{-1}\varphi^{13}+\varphi^{3\overline{3}},\hspace{0.4em}
    d\varphi^{2}=-2\sqrt{-1}\varphi^{23}-x\varphi^{3\overline{3}},\hspace{0.4em} 
    d\varphi^{3}=0
\end{equation*} where $x=0,1$.\\
Consider the terms $\Omega_{123\overline{1}}$ and $\Omega_{123\overline{2}}$:
\begin{align*}
    \Omega_{123\overline{1}}&=-\frac{(r^{2}s^{2}-|u|^{2})(xr^{2}s^{2}+x|u|^{2}+2\sqrt{-1}r^{2}\overline{u})}{8 \det \Xi}\\
    \Omega_{123\overline{2}}&=-\frac{(r^{2}s^{2}-|u|^{2})(r^{2}s^{2}+|u|^{2}-2\sqrt{-1}xs^{2}u)}{8 \det \Xi}
\end{align*}
Notice that, $\Omega_{123\overline{1}}=\Omega_{123\overline{2}}=0$ if and only if
\begin{equation*}
    xr^{2}s^{2}+x|u|^{2}+2\sqrt{-1}r^{2}\overline{u}=r^{2}s^{2}+|u|^{2}-2\sqrt{-1}xs^{2}u=0\;.
\end{equation*}
If $x=1$, these equations imply $Re (u)=0$, $Im (u)=-r^{2}$ and $r^{2}=s^{2}$, which is a contradiction to the positive definiteness of the metric. Hence, $x=0$ and
$\Omega_{123\overline{2}}$ is always different from zero.

\subsubsection{Solvmanifolds in Families (Siv3)}\label{susec_Siv3}
The complex structure equations for this family are the following (with $A\in\mathbb{C}\setminus\mathbb{S}^{1}$):
\begin{equation*}
    d\varphi^{1}=A\varphi^{13}-\varphi^{1\overline{3}},\hspace{0.4em}
    d\varphi^{2}=-A\varphi^{23}+\varphi^{2\overline{3}},\hspace{0.4em} 
    d\varphi^{3}=0
\end{equation*} 
We directly compute (and set equal to zero)
\begin{equation*}
    \Omega_{123\overline{3}}=\frac{ \sqrt{-1}(r^{2}uv^{2} + s^{2}z^{2}\overline{u})A }{8\sqrt{-1}\det\Xi}\;.
\end{equation*}
This vanishes if $A=0$ or $r^{2}uv^{2} + s^{2}z^{2}\overline{u}=0$. We start analyzing the case $A\neq 0$. We compute the following coefficients of the Bismut curvature tensor (where $I=\sqrt{-1}$):
\begin{multline*}
    \Omega_{133\overline{1}}=\left[ 4Ar^{2}t^{2}|u|^{2} + (A-1)r^{4}|v|^{2} + (1-3A)Ir^{2}z\overline{u}\overline{v} \right.\\
    \left. + (A-1)Ir^{2}uv\overline{z} - (A+1)|u|^{2}|z|^{2}\right]16\sqrt{-1}\det\Xi
\end{multline*}
\begin{multline*}
    \Omega_{133\overline{2}}=\left[-4IAr^{2}s^{2}t^{2}u + (3A+1)Is^{2}u|z|^{2} + (1+A)Ir^{2}u|v|^{2} \right.\\
    \left.+ (1-A)r^{2}s^{2}z\overline{v} -(A+1)u^{2}v\overline{z} \right]/16\sqrt{-1}\det\Xi
\end{multline*}
\begin{multline*}
    \Omega_{233\overline{1}}=\left[4IAr^{2}s^{2}t^{2}\overline{u} + (1-A)r^{2}s^{2}v\overline{z} -(3A+1)Ir^{2}|v|^{2}\overline{u} \right.\\
    \left. -(A+1)Is^{2}|z|^{2}\overline{u} - (A + 1)z\overline{u}^{2}\overline{v}\right]/16\sqrt{-1}\det\Xi
\end{multline*}
\begin{multline*}
    \Omega_{233\overline{2}}=\left[ 4As^{2}t^{2}|u|^{2} + (A-1)s^{4}|z|^{2} +(1-A)Is^{2}z\overline{u}\overline{v} \right.\\ \left.+ (3A-1)Is^{2}uv\overline{z} - (A + 1)|u|^{2}|v|^{2} \right]/16\sqrt{-1}\det\Xi
\end{multline*}
Notice that $A-1\neq 0$ by hypothesis, thus if $u=0$ we get also $v=0$ and $z=0$ from $\Omega_{133\overline{1}}=0$ and $\Omega_{233\overline{2}}=0$ respectively. On the other hand, if $u\neq 0$ then $v$ vanishes if and only if $z$ vanishes (from $r^{2}uv^{2} + s^{2}z^{2}\overline{u}=0$), and they can not vanish together otherwise $u$ should also be $0$ (from $\Omega_{233\overline{2}}=0$). Now suppose $u,v,z\neq 0$ and consider the following equations:
\begin{align}
    \Omega_{133\overline{2}}-\overline{\Omega_{233\overline{1}}}&=\frac{s^{2}|z|^{2}-r^{2}|v|^{2}}{8\sqrt{-1}\det\Xi}A\sqrt{-1}=0 \nonumber \\
    \Omega_{133\overline{1}}|v|^{2}-\Omega_{233\overline{2}}|z|^{2}&=-\frac{\overline{uv}z+uv\overline{z}}{8\sqrt{-1}\det\Xi}A\sqrt{-1}r^{2}|v|^{2}=0 \nonumber \\
    \Omega_{233\overline{1}}uv-\Omega_{233\overline{2}}\overline{u}z&=A|u|^{2}\frac{s^{2}t^{2}(\sqrt{-1}r^{2}v-\overline{u}z)-2\sqrt{-1}r^{2}|v|^{2}v}{4\sqrt{-1}\det\Xi}=0 \label{uno} \\
    \Omega_{133\overline{1}}uv-\Omega_{133\overline{2}}\overline{u}z&=A|u|^{2}\frac{r^{2}t^{2}(uv-\sqrt{-1}s^{2}z)-2\sqrt{-1}r^{2}|v|^{2}z}{4\sqrt{-1}\det\Xi}=0 \label{due}
\end{align}
where we used the first one to get the second and the first two to get the last two. Finally from $(\ref{uno})\cdot z - (\ref{due})\cdot v=0$ we get $vz=0$ which is a contradiction. This shows that $u,v$ and $z$ must be zero and a direct computation shows that with this hypothesis (Cplx) is satisfied.\\
In case $A=0$, $\Omega_{123\overline{1}}$ and $\Omega_{123\overline{2}}$ become
\begin{align*}
    &\Omega_{123\overline{1}}=\frac{ (r^{2}s^{2}-|u|^{2})(\sqrt{-1}r^{2}v +z\overline{u}) }{16\sqrt{-1}\det\Xi},
    &&\Omega_{123\overline{2}}=\frac{(r^{2}s^{2}-|u|^{2})(uv-\sqrt{-1}s^{2}z) }{16\sqrt{-1}\det\Xi}
\end{align*}
The equations $\sqrt{-1}r^{2}v +z\overline{u}=0$ and $\sqrt{-1}s^{2}z -uv=0$ implies that $v$ vanishes if and only if $z$ vanishes. Moreover, if they are both different from zero, we can multiply the first one by $\overline{v}$ and the second one by $\overline{z}$; this leads to $\sqrt{-1}r^{2}|v|^{2}+\overline{uv}z=0=\sqrt{-1}s^{2}|z|^{2}-uv\overline{z}$ which is impossible. Hence $v$ and $z$ must be zero and with this hypothesis (Cplx) is satisfied.
\begin{rmk}\label{rmk: 5}
    In both cases $u=v=z=0$; $A\neq 0$ and $v=z=A=0$ these conditions (and then (Cplx)) are preserved by any Hermitian curvature flow in the family HCF.
\end{rmk}
Now, setting $v=z=0$, we get the following elements of the curvature tensor:
\begin{align*}
    \Omega_{1\overline{1}1\overline{1}}&=\frac{1}{2}\frac{r^{4}}{t^{2}}(A-1)(\overline{A}-1)\;;\\
    \Omega_{1\overline{1}3\overline{3}}&=-\frac{1}{2}\frac{(A-1)(\overline{A}-1)r^{4}s^{2}-((A-1)\overline{A}-A-3)r^{2}|u|^{2}}{r^{2}s^{2}-|u|^{2}}\;,
\end{align*}
showing that in both cases $u=0$ and $A=0$ the curvature tensor is neither non-negative nor non-positive.

\subsubsection{Solvmanifolds in Families (Sv)}
Recall the complex structure equations for this family:
\begin{equation*}
    d\varphi^{1}=-\varphi^{3\overline{3}},\hspace{0.4em}
    d\varphi^{2}=\frac{\sqrt{-1}}{2}\varphi^{12}+\frac{1}{2}\varphi^{1\overline{3}}-\frac{\sqrt{-1}}{2}\varphi^{2\overline{1}},\hspace{0.4em} 
    d\varphi^{3}=-\frac{\sqrt{-1}}{2}\varphi^{13}+\frac{\sqrt{-1}}{2}\varphi^{3\overline{1}}
\end{equation*}
Consider the terms $\Omega_{233\overline{3}}$ and $\Omega_{123\overline{1}}$: if we set 
\begin{equation*}
    \Omega_{233\overline{3}}=\frac{s^{4}|z|^{2}}{32\sqrt{-1} \det \Xi}=0\;,
\end{equation*} 
we get $z=0$, but then $\Omega_{123\overline{1}}=-\frac{r^{2}s^{2}-|u|^{2}}{4t^{2}}\neq 0$; thus (Cplx) is never satisfied.

\section*{Acknowledgements}
I would like to thank my advisor Daniele Angella for many helpful suggestions and for his constant support and encouragement. I am also grateful to Simone Diverio, Francesco Pediconi and Luis Ugarte Vilumbrales for their comments and suggestions.

\printbibliography

@article {Streets_2011,
    AUTHOR = {Streets, Jeffrey and Tian, Gang},
     TITLE = {Hermitian curvature flow},
   JOURNAL = {J. Eur. Math. Soc. (JEMS)},
  FJOURNAL = {Journal of the European Mathematical Society (JEMS)},
    VOLUME = {13},
      YEAR = {2011},
    NUMBER = {3},
     PAGES = {601--634},
      ISSN = {1435-9855},
   MRCLASS = {53C44 (32Q20 53C55)},
  MRNUMBER = {2781927},
MRREVIEWER = {Valentino Tosatti},
       %%DOI = {10.4171/JEMS/262},
       %%URL = {https://%DOI.org/10.4171/JEMS/262},
}

@article {Ustinovskiy_2019,
    AUTHOR = {Ustinovskiy, Yury},
     TITLE = {The {H}ermitian curvature flow on manifolds with non-negative {G}riffiths curvature},
   JOURNAL = {Amer. J. Math.},
  FJOURNAL = {American Journal of Mathematics},
    VOLUME = {141},
      YEAR = {2019},
    NUMBER = {6},
     PAGES = {1751--1775},
      ISSN = {0002-9327},
   MRCLASS = {53C44 (53C55)},
  MRNUMBER = {4030526},
MRREVIEWER = {Masaya Kawamura},
       %%DOI = {10.1353/ajm.2019.0046},
       %%URL = {https://%DOI.org/10.1353/ajm.2019.0046},
}

@article {liu2012geometry,
    AUTHOR = {Liu, Ke-Feng and Yang, Xiao-Kui},
     TITLE = {Geometry of {H}ermitian manifolds},
   JOURNAL = {Internat. J. Math.},
  FJOURNAL = {International Journal of Mathematics},
    VOLUME = {23},
      YEAR = {2012},
    NUMBER = {6},
     PAGES = {1250055, 40},
      ISSN = {0129-167X},
   MRCLASS = {53C55 (53C44)},
  MRNUMBER = {2925476},
MRREVIEWER = {Anna M. Fino},
      % %DOI = {10.1142/S0129167X12500553},
       %%URL = {https://%DOI.org/10.1142/S0129167X12500553},
}

@article {Streets_2010,
    AUTHOR = {Streets, Jeffrey and Tian, Gang},
     TITLE = {A parabolic flow of pluriclosed metrics},
   JOURNAL = {Int. Math. Res. Not. IMRN},
  FJOURNAL = {International Mathematics Research Notices. IMRN},
      YEAR = {2010},
    NUMBER = {16},
     PAGES = {3101--3133},
      ISSN = {1073-7928},
   MRCLASS = {53C44 (32Q26 53C55 58E11)},
  MRNUMBER = {2673720},
MRREVIEWER = {Valentino Tosatti},
       %DOI = {10.1093/imrn/rnp237},
       %URL = {https://%DOI.org/10.1093/imrn/rnp237},
}

@article {MR3334093,
    AUTHOR = {Ugarte, Luis and Villacampa, Raquel},
     TITLE = {Balanced {H}ermitian geometry on 6-dimensional nilmanifolds},
   JOURNAL = {Forum Math.},
  FJOURNAL = {Forum Mathematicum},
    VOLUME = {27},
      YEAR = {2015},
    NUMBER = {2},
     PAGES = {1025--1070},
      ISSN = {0933-7741},
   MRCLASS = {53C55 (53C29 81T30)},
  MRNUMBER = {3334093},
MRREVIEWER = {Simon G. Chiossi},
       %DOI = {10.1515/forum-2012-0072},
       %URL = {https://%DOI.org/10.1515/forum-2012-0072},
}

@article {Ugarte_2014,
    AUTHOR = {Ugarte, Luis and Villacampa, Raquel},
     TITLE = {Non-nilpotent complex geometry of nilmanifolds and heterotic supersymmetry},
   JOURNAL = {Asian J. Math.},
  FJOURNAL = {Asian Journal of Mathematics},
    VOLUME = {18},
      YEAR = {2014},
    NUMBER = {2},
     PAGES = {229--246},
      ISSN = {1093-6106},
   MRCLASS = {53C55 (53C29 81T30)},
  MRNUMBER = {3217635},
MRREVIEWER = {Daniele Angella},
       %DOI = {10.4310/AJM.2014.v18.n2.a3},
       %URL = {https://%DOI.org/10.4310/AJM.2014.v18.n2.a3},
}

@article {Andrada_2011,
    AUTHOR = {Andrada, A. and Barberis, M. L. and Dotti, I.},
     TITLE = {Classification of abelian complex structures on 6-dimensional {L}ie algebras},
   JOURNAL = {J. Lond. Math. Soc. (2)},
  FJOURNAL = {Journal of the London Mathematical Society. Second Series},
    VOLUME = {83},
      YEAR = {2011},
    NUMBER = {1},
     PAGES = {232--255},
      ISSN = {0024-6107},
   MRCLASS = {17B30 (17B05)},
  MRNUMBER = {2763953},
       %DOI = {10.1112/jlms/jdq071},
       %URL = {https://%DOI.org/10.1112/jlms/jdq071},
}

@article {article,
    AUTHOR = {Ceballos, M. and Otal, A. and Ugarte, L. and Villacampa, R.},
     TITLE = {Invariant complex structures on 6-nilmanifolds: classification, {F}r\"{o}licher spectral sequence and special {H}ermitian metrics},
   JOURNAL = {J. Geom. Anal.},
  FJOURNAL = {Journal of Geometric Analysis},
    VOLUME = {26},
      YEAR = {2016},
    NUMBER = {1},
     PAGES = {252--286},
      ISSN = {1050-6926},
   MRCLASS = {53C15 (17B30 32G05)},
  MRNUMBER = {3441513},
MRREVIEWER = {Anna M. Fino},
       %DOI = {10.1007/s12220-014-9548-4},
       %URL = {https://%DOI.org/10.1007/s12220-014-9548-4},
}

@article {Ugarte_2007,
    AUTHOR = {Ugarte, Luis},
     TITLE = {Hermitian structures on six-dimensional nilmanifolds},
   JOURNAL = {Transform. Groups},
  FJOURNAL = {Transformation Groups},
    VOLUME = {12},
      YEAR = {2007},
    NUMBER = {1},
     PAGES = {175--202},
      ISSN = {1083-4362},
   MRCLASS = {53C55 (17B30 53C25)},
  MRNUMBER = {2308035},
MRREVIEWER = {Sergio Console},
       %DOI = {10.1007/s00031-005-1134-1},
       %URL = {https://%DOI.org/10.1007/s00031-005-1134-1},
}

@article {Salamon_2001,
    AUTHOR = {Salamon, S. M.},
     TITLE = {Complex structures on nilpotent {L}ie algebras},
   JOURNAL = {J. Pure Appl. Algebra},
  FJOURNAL = {Journal of Pure and Applied Algebra},
    VOLUME = {157},
      YEAR = {2001},
    NUMBER = {2-3},
     PAGES = {311--333},
      ISSN = {0022-4049},
   MRCLASS = {53C30 (17B30 32G05)},
  MRNUMBER = {1812058},
MRREVIEWER = {Mar\'{\i}a Laura Barberis},
       %DOI = {10.1016/S0022-4049(00)00033-5},
       %URL = {https://%DOI.org/10.1016/S0022-4049(00)00033-5},
}

@article {Ustinovskiy_2020,
    AUTHOR = {Ustinovskiy, Yury},
     TITLE = {On the structure of {H}ermitian manifolds with semipositive {G}riffiths curvature},
   JOURNAL = {Trans. Amer. Math. Soc.},
  FJOURNAL = {Transactions of the American Mathematical Society},
    VOLUME = {373},
      YEAR = {2020},
    NUMBER = {8},
     PAGES = {5333--5350},
      ISSN = {0002-9947},
   MRCLASS = {53E30 (53C55)},
  MRNUMBER = {4127878},
       %DOI = {10.1090/tran/8101},
       %URL = {https://%DOI.org/10.1090/tran/8101},
}

@article {MR3436163,
    AUTHOR = {Fino, Anna and Otal, Antonio and Ugarte, Luis},
     TITLE = {Six-dimensional solvmanifolds with holomorphically trivial
              canonical bundle},
   JOURNAL = {Int. Math. Res. Not. IMRN},
  FJOURNAL = {International Mathematics Research Notices. IMRN},
      YEAR = {2015},
    NUMBER = {24},
     PAGES = {13757--13799},
      ISSN = {1073-7928},
   MRCLASS = {53C30 (17B30 53C55)},
  MRNUMBER = {3436163},
MRREVIEWER = {Viviana Jorgelina del Barco},
       %DOI = {10.1093/imrn/rnv112},
       %URL = {https://%DOI.org/10.1093/imrn/rnv112},
}

@article {ota14,
    AUTHOR = {Antonio Otal},
    TITLE = {Solvmanifolds with holomorphically trivial canonical bundle},
    JOURNAL = {PhD Thesis, Universidad de Zaragoza},
    YEAR = {2014},
}

@article {angella2018gauduchon,
    author = {Daniele Angella and Antonio Otal and Luis Ugarte and Raquel Villacampa},
    title = {On Gauduchon connections with Kähler-like curvature},
    JOURNAL = {To appear in Communications in Analysis and Geometry}
}

@Inbook{streets2018pluriclosed,
    author = "Streets, Jeffrey",
    editor = "Chen, Jingyi and Lu, Peng and Lu, Zhiqin and Zhang, Zhou",
    title = "Pluriclosed Flow and the Geometrization of Complex Surfaces",
    bookTitle = "Geometric Analysis: In Honor of Gang Tian's 60th Birthday",
    year = "2020",
    publisher = "Springer International Publishing",
    address = "Cham",
    pages = "471--510",
    isbn = "978-3-030-34953-0",
    %DOI = "10.1007/978-3-030-34953-0_19",
    %URL = "https://%DOI.org/10.1007/978-3-030-34953-0_19"
}

@book {MR3828497,
    AUTHOR = {Ustinovskiy, Yury},
     TITLE = {Hermitian {C}urvature {F}low and {C}urvature {P}ositivity
              {C}onditions},
      NOTE = {Thesis (Ph.D.)--Princeton University},
 PUBLISHER = {ProQuest LLC, Ann Arbor, MI},
      YEAR = {2018},
     PAGES = {112},
      ISBN = {978-0438-04755-6},
   MRCLASS = {Thesis},
  MRNUMBER = {3828497},
       %URL = {http://gateway.proquest.com/open%URL?%URL_ver=Z39.88-2004&rft_val_fmt=info:ofi/fmt:kev:mtx:dissertation&res_dat=xri:pqm&rft_dat=xri:pqdiss:10812339},
}

@article {tos-wei,
    AUTHOR = {Tosatti, Valentino and Weinkove, Ben},
     TITLE = {On the evolution of a {H}ermitian metric by its
              {C}hern-{R}icci form},
   JOURNAL = {J. Differential Geom.},
  FJOURNAL = {Journal of Differential Geometry},
    VOLUME = {99},
      YEAR = {2015},
    NUMBER = {1},
     PAGES = {125--163},
      ISSN = {0022-040X},
   MRCLASS = {53C44 (53C55)},
  MRNUMBER = {3299824},
MRREVIEWER = {Chengjie Yu},
       %URL = {http://projecteuclid.org/euclid.jdg/1418345539},
}

@article {liu-yang,
    AUTHOR = {Liu, Kefeng and Yang, Xiaokui},
     TITLE = {Ricci curvatures on {H}ermitian manifolds},
   JOURNAL = {Trans. Amer. Math. Soc.},
  FJOURNAL = {Transactions of the American Mathematical Society},
    VOLUME = {369},
      YEAR = {2017},
    NUMBER = {7},
     PAGES = {5157--5196},
      ISSN = {0002-9947},
   MRCLASS = {53C55 (32Q20 32Q25)},
  MRNUMBER = {3632564},
MRREVIEWER = {Massimiliano Pontecorvo},
       %DOI = {10.1090/tran/7000},
       %URL = {https://%DOI.org/10.1090/tran/7000},
}

@article {tong,
    AUTHOR = { Tong, Freid},
     TITLE = {A new positivity condition for the curvature of Hermitian manifolds},
   JOURNAL = { Mathematische Zeitschrift},
      YEAR = {2020},
      ISSN = {1432-1823},
       %DOI = { 10.1007/s00209-020-02647-w},
       %URL = {https://%DOI.org/10.1007/s00209-020-02647-w},
}

@article {boothby1958,
    AUTHOR = {Boothby, William M.},
     TITLE = {Hermitian manifolds with zero curvature},
   JOURNAL = {Michigan Math. J.},
  FJOURNAL = {Michigan Mathematical Journal},
    VOLUME = {5},
      YEAR = {1958},
    NUMBER = {2},
     PAGES = {229--233},
       %DOI = {10.1307/mmj/1028998068},
       %URL = {https://%DOI.org/10.1307/mmj/1028998068}
}

@manual{sage,
  Key          = {SageMath},
  Author       = {{The Sage Developers}},
  Title        = {{S}ageMath, the {S}age {M}athematics {S}oftware {S}ystem ({V}ersion 9.0.0)},
  note         = {{\tt https://www.sagemath.org}},
  Year         = {2020},
}

@misc{fino2021pluriclosed,
      title={Pluriclosed and Strominger K\"ahler-like metrics compatible with abelian complex structures}, 
      author={Anna Fino and Nicoletta Tardini and Luigi Vezzoni},
      year={2021},
      eprint={2102.01920},
      archivePrefix={arXiv},
      primaryClass={math.DG}
}

\end{document}